\documentclass[10pt]{amsart}
\usepackage{amssymb,amsmath,amsthm}
%\setlength{\parskip}{5pt plus 1pt minus 1pt}
%\addtolength{\hoffset}{-0.75cm}

%\setcounter{secnumdepth}{2}
%\setcounter{tocdepth}{2}

% notation...
\newcommand{\N}{{\ensuremath{\mathbb{N}}}}
\newcommand{\Z}{{\ensuremath{\mathbb{Z}}}}

\newcommand{\R}{{\ensuremath{\mathbb{R}}}}
\newcommand{\C}{{\ensuremath{\mathbb{C}}}}

\newcommand{\norm}[1]{\left\Vert#1\right\Vert}
\newcommand{\e}[1]{\ensuremath{e^{2\pi i#1}}}
\newcommand{\ee}[1]{\ensuremath{e^{-2\pi i#1}}}
\newcommand{\ip}[2]{\ensuremath{\left<#1,#2\right>}}
\newcommand{\set}[1]{\ensuremath{\left \{ #1 \right \}}}
\newcommand{\abs}[1]{\ensuremath{\left| #1 \right| }}

\newcommand{\ceil}[1]{\ensuremath{\lceil #1 \rceil}}

\newcommand{\osc}{\operatorname{osc}}

\newcommand{\ltwo}{\ell^2(\Z^d)}

\newcommand{\momspace}[1]{\mathcal{M}^{#1}}
\newcommand{\momd}[1]{M^{#1}}
\newcommand{\mom}[2]{M^{#1}_{#2}}

%Discrete Fourier transform
\newcommand{\f}{\mathcal{F}}

% Function derivatives
\newcommand{\deriv}[1]{\partial_{#1}}
\newcommand{\de}[1]{D^{#1}}
% Sobolev spaces (it may change!)
\newcommand{\sob}[1]{W^{#1}_\infty}

\newtheorem{theo}{Theorem}[section]
\newtheorem{prop}{Proposition}[section]
\newtheorem{coro}[theo]{Corollary}
\newtheorem{lemma}[theo]{Lemma}

\newtheorem{fact}{Fact}[section]

\newtheorem{rem}{Remark}[section]

\newtheorem*{theo*}{Theorem}
\newtheorem*{prop*}{Proposition}
\newtheorem*{coro*}{Corollary}
\newtheorem*{lemma*}{Lemma}
\newtheorem*{claim*}{Claim}
\newtheorem*{fact*}{Fact}
\newtheorem*{deff*}{Definition}
\newtheorem*{obs*}{Observation}
\newtheorem*{names*}{Names}

\begin{document}

\title{A deconvolution estimate and localization in spline-type spaces}

\author[Jos\'e Luis Romero]{Jos\'e Luis Romero}
\address{Departamento de Matem\'atica \\ Facultad de Ciencias Exactas y Naturales\\ Universidad
de Buenos Aires \\ and CONICET, Argentina}
\email[Jos\'e Luis Romero]{jlromero@dm.uba.ar}

\date{\today}

\begin{abstract}
In this article some explicit estimates on the decay of the convolutive inverse of
a sequence are proved. They are derived from the functional calculus for
Sobolev algebras. Applications include localization in spline-type spaces and
oversampling schemes.
\end{abstract}
\maketitle

\section{Introduction}
The deconvolution problem is the one of solving a convolution equation of the form
$a*x=y$, where $a$ is a fixed sequence, $y$ is a given signal of some type and $x$ is unknown.
This problem arises naturally on a wide number of fields. The formal general solution to the problem
is $x = b*y$, where $b$ is the convolutive inverse of $a$, that is $a*b=\delta$.

In applications, the equation $a*x=y$ can be solved by first computing some approximation
of the convolutive inverse element $b$ and then for every data $y$ computing the convolution $b*y$, or
by approximately solving for $x$ by means of some iterative algorithm. In order to evaluate
the performance of such schemes it is convenient to have some bounds on the decay of the sequence $b$.

There exist general decay principles asserting that if the original sequence $a$ has some good decay properties, and $b$ is only assumed to have some mild decay, then it is the case that $b$ has also good decay.
These principles often assert the preservation under convolutive inversion of the finiteness of
some quantity that measures decay. Consequently, in these cases, from specific decay information on $a$
only asymptotic decay information on $b$ is obtained. Moreover, for some of these decay principles
it is known that bounds on the quantity used to measure the decay of $a$, although enough to ensure
the finiteness of the same decay measure for $b$, are in fact insufficient to give any bound on it.
This means that even when using asymptotic decay estimates on the inverse convolutive element $b$ only
for some quality analysis purpose, the estimates thus obtained do not fully express all the qualities of $a$
involved in such an estimate.

The purpose of this article is to point out that the well-known functional calculus for Sobolev algebras
can be used to give explicit and general estimates on the decay of $b$ from corresponding estimates
on the decay of $a$. The estimates obtained in this article are sharp for very concentrated
sequences $a$ and tend to be coarser for not so concentrated sequences. They are not intended
to replace any numerical or case-specific bounds but to give a general a priori estimate that may be needed
to rigorously use some sharper techniques.

In section \ref{results} we introduce the quantities used to measure decay and present the announced
estimates. The next section provides the proofs. In section \ref{apps} we present some applications
to the localization of spline-type spaces and irregular sampling.

%Explenation of the results
For a one dimensional sequence of complex numbers $a \equiv \set{a_k}_{k \in \Z} \in \ell^2$
satisfying the decay condition $\sum_k k^2 \abs{a_k}^2 < \infty$ we give the following 
estimate: if $a$ has a convolutive inverse $b \in \ell^2$, then $b$ satisfies the same
decay condition: $\sum_k k^2 \abs{b_k}^2 < \infty$. Moreover, we give a concrete bound on that
number (see Theorem \ref{one_theo}.)
This is somehow in the spirit of Wiener's classical $1/f$ lemma that asserts that if a
sequence $a \in \ell^1$ has a convolutive inverse $b \in \ell^2$, then $b$ must belong to $\ell^1$.

Observe that the quantity preserved by our estimate is the $\ell^2$ norm of the weighted sequence
$\set{ka_k}_{k\in\Z}$. For more dimensions and stronger decay conditions our estimate are more
complicated. We consider a sequence $a \equiv \set{a_k}_{k \in \Z^d} \in \ell^2$ and measure
its decay by imposing restrictions on the weighted sequence $\set{k^\alpha a_k}_{k \in \Z^d}$
where $\alpha$ is a multi-index (see definitions in Section \ref{prelims}.) The quantity preserved
from $a$ to $b$ is more difficult to interpret in this case; it is the $L^\infty$ norm of the Fourier
transform of the weighted sequence $\set{k^\alpha a_k}_{k \in \Z^d} \in \ell^2$. Nevertheless, this quantity
can be bounded above and below by the $\ell^1$ and $\ell^2$ norms of that sequence respectively, which
are quantities that have a clear meaning. Consequently, from the practical point of view, the estimates
on the decay of $b$ follow from slightly stronger estimates on the decay of $a$. Yet, the fact remains
that any degree of decay of $b$ can be granted by requiring a sufficiently high degree of decay of $a$.

For dimension greater that 1 and heavy weights the estimates we give are recursive: bounds
for the weighted version of $b$, $\set{k^\alpha b_k}_k$ are given
in terms of the weighted versions of $a$ and $b$, $\set{k^\beta a_k}_k$ and
$\set{k^\gamma b_k}_k$ with $\beta \leq \alpha$ and $\gamma < \alpha$ (where
inequalities are to be interpreted coordinatewise.) So, concrete estimates are obtained by
iterating these estimates. Observe that bounds on the weights $k^\alpha$ applied to $b$
are derived from bounds on the weights $k^\beta$ appplies to $a$ for all $\beta \leq \alpha$.
Hence, our approach allows for the preservation of anisotropic weights (that is, weights that
give more importance to certain directions). For example in dimension $2$ suppose that a sequence
$\set{a_{k,j}}_{k,j \in \Z^d}$ satisfies a decay condition of the form
$\sum_{k,j} (\abs{k}^5 + \abs{j}^2) \abs{a_{k,j}} < \infty$.
Then, if $a$ has a convolutive inverse $b \in \ell^2$, it will satisfy
$\sum_{k,j} (\abs{k}^5 + \abs{j}^2)^2 \abs{b_{k,j}}^2 < \infty$.
This can be seen by considering the weights $k^5=(k,j)^{(5,0)}$ and $j^2=(k,j)^{(0,2)}$
separately and adding the resulting estimates for $b$.

A (principal) spline-type space is a subspace of $L^p(\R^d)$ having a $p$-Riesz
basis\footnote{See definition in Section \ref{apps}.}
of integer translates of a generator function $\varphi$ satisfying some smoothness and decay requirements.
When $p=2$ it is well-known that the Riesz basis condition is equivalent to the boundedness
above and below of the so called \emph{gramian function}
$\sum_{k \in \Z^d} \abs{\hat{\varphi}(\cdot-k)}^2$ and is therefore much easier to grant than
it is for the general value of $p$. Moreover, in this case, the basis bounds are given by the essential infimum
and supremum of the gramian function. It has been shown (see \cite{algr01}) that deconvolution estimates
can be used to ensure that, for a well-localized generator function $\varphi$, if its integer translates
form an $2$-Riesz sequence, then they form a $p$-Riesz sequence for the whole range $1\leq p\leq\infty$.
This is sometimes referred to as a localization principle (see \cite{gr04} and \cite{bacahela06}.)
If this principle is based on an asymptotic deconvolution estimate it is not clear how the $p$-Riesz basis bounds
depend on the generator function $\varphi$. In section \ref{apps} we apply our deconvolution
bounds to the problem of localization in spline-type spaces to obtain explicit bounds
for the basis constants for the whole range of values of $p$ (Corollary \ref{app_loc_coro}.)
This yields some qualitative conclusions on these bounds. The results are given in dimension
1 for simplicity but the techniques they rely on are available for arbitrary dimensions.
We show that if the set of translates $\set{\varphi(\cdot-k): k \in \Z}$ of a well-localized
function $\varphi$ form a $L^2$-Riesz sequence with lower bound $A$, then they form
an $L^p$-Riesz sequence with lower bound $\approx \frac{A^2}{A+1}$, for all $1\leq p \leq\infty$.
Moreover, the implicit constants depends only and continuously on the parameters measuring the decay of $\varphi$. This can be viewed as a stability result and was the motivating question for this article.

As a final application we present some consequences for a well-know sampling scheme.
In \cite{alfe98}, Aldroubi and Feichtinger developed a general sampling method for spline-type
spaces and proved that any sufficiently dense set is adequate for reconstruction. Moreover
an effective reconstruction algorithm is given. The proof is based on the fact that functions
in a spline-type space have a controlled modulus of continuity. The technique of deriving
sampling inequalities from oscillation bounds had a lasting influence on the sampling literature.
As shown in \cite{alfe98}, in order to use their general method to derive explicit sufficient-density
bounds it is necessary to have explicit bounds for the $p$-Riesz sequence of translates
$\set{\varphi(\cdot-k): k \in \Z^d}$. We use the bounds derived in Section \ref{apps}
to give a version of the sampling theorem in \cite{alfe98}
where the prescribed sampling density and the sampling bounds are absolutely explicit,
though in general coarse.
The result is intended to shed some light on the question of what qualities of the sampling
model are involved in the prescribed sampling density and bounds. It should not be considered
as a replacement for specific sampling theorems concerning specific function classes.
\section{Preliminaries and notation}
\label{prelims}
Throughout the article $d$ will be a fixed integer (the dimension) and $I^d$
will denote the unitary cube $[0,1]^d$.
For a real number $x$, its \emph{ceiling} $\ceil{x}$ is the smallest integer
greater or equal to $x$.

We call an element of $\N_0^d$ a \emph{multi-index}. For a multi-index $\alpha$
we define its \emph{size} by $\abs{\alpha}=\sum_j \alpha_j$.
Given two multi-indexes $\alpha, \beta$ we 
say that $\beta \leq \alpha$ if $\beta_j \leq \alpha_j$ for all $j$
and set
\[
\binom{\alpha}{\beta} := \prod_{i=1}^d \binom{\alpha_i}{\beta_i}.
\]

For a sequence of complex numbers $a \equiv \set{a_k}_{k \in \Z^d}$
and a multi-index $\alpha \in \N_0^d$, let us set
\[
\momd{\alpha}(a)_k := k^\alpha a_k = k_1^{\alpha_1} \cdots k_d^{\alpha_d} a_k.
\]

For a locally integrable function $f:I^d \to \C$ and a multi-index $\alpha$
we denote by $\de{\alpha}(f)$ the distributional (weak) derivative of $f$,
$\de{\alpha}(f) = \partial_{x_1}^{\alpha_1} \ldots \partial_{x_d}^{\alpha_d}f$.

For a sequence $a \in \ell^2(\Z^d)$ we use the following version of
the discrete Fourier transform,
\[
\hat{a} := \sum_k a_k \e{k \cdot},
\]
and denote by $\f$ the operator $a \mapsto \hat{a}$. Then,
the following relation holds,
\[
\de{\alpha} \f = (2\pi i)^\alpha \momd{\alpha} \f.
\]
Similarly, for a function $f \in L^2(\R^d)$ we use the following version of
the continuous Fourier transform,
\[
\hat{f}(w):=\int_{\R^d} f(x) \ee{\ip{w}{x}} dx.
\]

If $E \subset L^2(I^d)$ is a subspace with its own norm $\norm{\cdot}_E$,
we set 
\[
\f^{-1}(E) := \set{a \in \ltwo: \hat{a} \in E}
\]
and endow it with the norm $\norm{a}_{f^{-1}(E)} := \norm{\hat{a}}_E$.
In the case $E=L^\infty(I^d)$ we will use the notation
$\norm{a}_{op} := \norm{\hat{a}}_{L^\infty(I^d)}$.

\section{Main results}
\label{results}
For a multi-index $\alpha \in \N_0^d$ and a Banach space of sequences $B$, let us define,
\[
\momspace{\alpha}(B) := \set{a \in B: \momd{\beta}(a) \in B \mbox{, for all $\beta \leq \alpha$}}.
\]
For $a \in \momspace{\alpha}(B)$, we call the following quantity
\emph{the $(\alpha, B)$ momentum} of $a$,
\[
\mom{\alpha}{B}(a) := \norm{\momd{\alpha}(a)}_B.
\]
When $B=\ell^p$, we shall use the notation $\momspace{\alpha}_p$ and 
$\mom{\alpha}{p}$, for $\momspace{\alpha}(\ell^p)$ and $\mom{\alpha}{\ell^p}$
respectively. We will also write $\momspace{\alpha}_{op}=\momspace{\alpha}(\f^{-1}(L^\infty))$
and $\mom{\alpha}{op} = \mom{\alpha}{\f^{-1}(L^\infty)}$.

The classes $\momspace{\alpha}(B)$ can be used to model decay. Different choices of the space $B$
and the multi-index $\alpha$ produce various decay requirements, privileging some directions over others.

We now claim that if a sequence in $\momspace{\alpha}(B)$ has a convolutive inverse
in $\ell^2$, then that inverse must belong to $\momspace{\alpha}(B)$. Moreover,
we give some estimates on the quantities involved.

\begin{theo}
\label{multi_theo}
Let $a \in \momspace{\alpha}_{op}(\Z^d)$, satisfy $0 \leq A \leq \abs{\hat{a}} \leq B < \infty$
for some constants $A,B$. Let $b \in \ltwo$ be defined by $\hat{b}:=1/\hat{a}$.
Then $b \in \momspace{\alpha}_{op}(\Z^d)$. Moreover, the following recursive estimates hold:
\begin{align*}
\mom{\gamma}{op}(b) &\leq
1/A \sum_{\beta < \gamma} \binom{\gamma}{\beta} \mom{\beta}{op}(b) \mom{\gamma-\beta}{op}(a)
\mbox{, for all $0 \not= \gamma \leq \alpha$,}
\\
\mom{0}{op}(b) &\leq 1/A.
\end{align*}
\end{theo}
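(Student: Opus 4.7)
The plan is to turn the identity $\hat{a}\,\hat{b}=1$ into a recursive estimate by differentiating it and invoking Leibniz's rule, exactly as in the classical functional-calculus argument for Sobolev algebras on the torus. I will induct on $|\gamma|$ and simultaneously establish two things at each step: that $D^\gamma\hat{b}$ exists as an $L^\infty$ function on $I^d$, and that its norm satisfies the stated recursive bound.

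The base case $\gamma=0$ is immediate from $\hat{b}=1/\hat{a}$ and the hypothesis $|\hat{a}|\geq A$, which yields $\|\hat{b}\|_\infty\leq 1/A$, i.e.\ $\mom{0}{op}(b)\leq 1/A$. For the inductive step, assume that $D^\beta\hat{b}\in L^\infty(I^d)$ for every $\beta<\gamma$, where $0\neq\gamma\leq\alpha$. The assumption $a\in\momspace{\alpha}_{op}$ means $D^\delta\hat{a}\in L^\infty(I^d)$ for all $\delta\leq\alpha$, with
\[
\norm{D^\delta\hat{a}}_\infty = (2\pi)^{|\delta|}\mom{\delta}{op}(a),
\]
by the relation $D^\delta\f=(2\pi i)^\delta\momd{\delta}\f$ stated in the preliminaries. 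Differentiating the identity $\hat{a}\hat{b}=1$ in the distributional sense, Leibniz's rule applied to products of functions having pointwise bounded weak derivatives of the required orders gives
\[
0=D^\gamma(\hat{a}\hat{b})=\sum_{\beta\leq\gamma}\binom{\gamma}{\beta}D^\beta\hat{b}\,D^{\gamma-\beta}\hat{a},
\]
and isolating the $\beta=\gamma$ term yields
\[
\hat{a}\,D^\gamma\hat{b}=-\sum_{\beta<\gamma}\binom{\gamma}{\beta}D^\beta\hat{b}\,D^{\gamma-\beta}\hat{a}.
\]
The right-hand side is a finite sum of products of $L^\infty$ functions, so it belongs to $L^\infty$. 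Dividing by $\hat{a}$, which is bounded below by $A>0$, shows that $D^\gamma\hat{b}\in L^\infty(I^d)$, establishing $b\in\momspace{\gamma}_{op}$.

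Taking $L^\infty$ norms on both sides of the displayed identity, using $|\hat{a}|\geq A$, and substituting $\|D^\delta\hat{a}\|_\infty=(2\pi)^{|\delta|}\mom{\delta}{op}(a)$ and $\|D^\beta\hat{b}\|_\infty=(2\pi)^{|\beta|}\mom{\beta}{op}(b)$ gives
\[
(2\pi)^{|\gamma|}\mom{\gamma}{op}(b)\leq\frac{1}{A}\sum_{\beta<\gamma}\binom{\gamma}{\beta}(2\pi)^{|\beta|}\mom{\beta}{op}(b)\,(2\pi)^{|\gamma-\beta|}\mom{\gamma-\beta}{op}(a),
\]
and since $|\beta|+|\gamma-\beta|=|\gamma|$ the factors of $2\pi$ cancel, giving the desired recursion.

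The main obstacle is the justification of the Leibniz rule in the distributional sense: one needs to know a priori that $\hat{b}$ admits sufficiently regular weak derivatives for the product rule to apply. This is handled by the induction, which carries the regularity of $\hat{b}$ along as part of the hypothesis; the ellipticity estimate $|\hat{a}|\geq A>0$ is what allows us to actually solve for $D^\gamma\hat{b}$ and conclude it lies in $L^\infty$ rather than merely in some weaker distribution class. Everything else — the Leibniz expansion, the cancellation of the $2\pi$ factors, and the application of $D^\delta\f=(2\pi i)^\delta\momd{\delta}\f$ — is routine manipulation.
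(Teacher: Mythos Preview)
Your approach is the same as the paper's --- differentiate $\hat a\,\hat b=1$, expand by Leibniz, isolate the top term, divide by $\hat a$, and take $L^\infty$ norms --- and the norm computation is exactly right. But there is a genuine gap in the regularity step, and it is precisely the point you flag in your last paragraph and then wave away.

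You claim the existence of $D^\gamma\hat b$ is ``handled by the induction.'' It is not. Your inductive hypothesis gives $D^\beta\hat b\in L^\infty$ only for $\beta<\gamma$, but Leibniz's rule at order $\gamma$ (Lemma~\ref{product_rule}(c)) requires \emph{both} factors to lie in $\sob{\gamma}$; you cannot use it to deduce that $D^\gamma\hat b$ exists. The circularity is already visible at the very first step $\gamma=e_j$: writing $0=\partial_j(\hat a\hat b)=(\partial_j\hat a)\hat b+\hat a(\partial_j\hat b)$ presupposes that $\partial_j\hat b$ exists as a locally integrable function, which is exactly what you want to prove. Nothing in the hypothesis $\hat b\in L^\infty$ gives that; the distributional derivative $\partial_j\hat b$ could a priori be a singular distribution, and then the product $\hat a\,\partial_j\hat b$ is not even well defined (since $\hat a$ is only Lipschitz, not smooth).

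The paper fills this gap by proving the regularity of $\hat b$ \emph{before} the recursion, not during it: Corollary~\ref{deriv_inv} (obtained from a chain-rule lemma via mollification and Egorov's theorem) shows directly that $\partial_j(1/f)=-\partial_j f/f^2$ whenever $f$ is bounded above and below and has a weak $\partial_j$-derivative, and Lemma~\ref{sob_inv} iterates this to conclude $1/\hat a\in\sob{\alpha}$ outright. Only then is Leibniz applied, now legitimately, to obtain the recursive bound. Your write-up would be correct if you invoked Lemma~\ref{sob_inv} at the outset to get $\hat b\in\sob{\alpha}$; the induction can then be dropped entirely from the existence part and used only to read off the numerical estimates.
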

\begin{rem}
Using different multi-indexes it is possible to put more weight
on some particular directions. The result shows that the privileged concentration
in those directions is preserved from $a$ to $b$.
\end{rem}
It will be seen (see Remark \ref{conv_alg}) that $\momspace{\alpha}_{op}$ is
a convolution algebra. This gives meaning to the following Corollary.
% to do: IMPROVE THE LOOK OF THE FORMULA
\begin{coro}
\label{multi_invariance}
Let $\alpha$ be multi-index. Then, the inclusion
\[
\momspace{\alpha}_{op}(\Z^d) \hookrightarrow B(\ltwo),
\quad a \mapsto a*-
\]
preserves the spectrum of each element.
\end{coro}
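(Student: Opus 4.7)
The plan is to prove equality $\sigma_{\momspace{\alpha}_{op}}(a) = \sigma_{B(\ltwo)}(a*-)$ by a double inclusion. The inclusion $\sigma_{B(\ltwo)}(a*-) \subseteq \sigma_{\momspace{\alpha}_{op}}(a)$ is the formal direction that holds for any unital inclusion of Banach algebras: invoking Remark \ref{conv_alg}, $\momspace{\alpha}_{op}$ is a Banach algebra under convolution with unit $\delta$, and the inclusion map sends $\delta$ to the identity of $B(\ltwo)$; hence any inverse computed inside $\momspace{\alpha}_{op}$ is automatically an inverse in $B(\ltwo)$.

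For the reverse inclusion, fix $a \in \momspace{\alpha}_{op}$ and assume $\lambda \notin \sigma_{B(\ltwo)}(a*-)$. Via the Fourier transform $\f:\ltwo \to \LTWO$, convolution by $a$ is unitarily equivalent to multiplication by $\hat a \in L^\infty(I^d)$, so its spectrum is the essential range of $\hat a$. Therefore the assumption yields constants $0 < A \leq B < \infty$ with
\[
A \leq |\lambda - \hat a(\xi)| \leq B \quad \text{for a.e.\ } \xi \in I^d.
\]
Because $\delta \in \momspace{\alpha}_{op}$ (one has $\hat\delta \equiv 1 \in L^\infty$ and $\momd{\beta}(\delta) = 0$ for every $\beta \neq 0$) and $\momspace{\alpha}_{op}$ is a linear space, the element $\lambda\delta - a$ still lies in $\momspace{\alpha}_{op}$. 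I would then apply Theorem \ref{multi_theo} to $\lambda\delta - a$ to produce $b \in \momspace{\alpha}_{op}$ with $\hat b = 1/(\lambda - \hat a)$. On the Fourier side this identity is exactly $\hat b \cdot (\lambda - \hat a) = 1$, which transforms back to the convolution identity $b*(\lambda\delta - a) = \delta$. Hence $\lambda\delta - a$ is invertible in $\momspace{\alpha}_{op}$ and $\lambda \notin \sigma_{\momspace{\alpha}_{op}}(a)$.

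The only nontrivial ingredient is Theorem \ref{multi_theo} itself, which is precisely what the corollary is designed to package; the two auxiliary checks (that $\momspace{\alpha}_{op}$ is unital with unit $\delta$, and that the spectrum of a convolution operator on $\ltwo$ coincides with the essential range of its symbol) are standard, so I do not foresee any real obstacle beyond bookkeeping.
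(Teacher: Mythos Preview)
Your proposal is correct and follows essentially the same route as the paper: conjugate by the Fourier transform to see that invertibility of the convolution operator forces $|\widehat{\lambda\delta-a}|$ to be bounded below, then invoke Theorem \ref{multi_theo} to produce the inverse inside $\momspace{\alpha}_{op}$. The paper's proof is terser---it treats only the case $\lambda=0$ and leaves the easy inclusion and the shift $a\mapsto a-\lambda\delta$ implicit---but the substance is identical.
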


Theorem \ref{multi_theo} establishes the preservation under inversion
of the momenta $\mom{\alpha}{op}$. This quantities can result, however,
difficult to interpret. In order to derive concretes estimates the following
embeddings and inequalities can be used:
\begin{align*}
\momspace{\alpha}_1 \hookrightarrow \momspace{\alpha}_{op} \hookrightarrow \momspace{\alpha}_2,
\\
\mom{\alpha}{2} \leq \mom{\alpha}{op} \leq \mom{\alpha}{1}.
\end{align*}
The use of these relations together with the estimates in Theorem \ref{multi_theo}
result in an asymmetric corollary where bounds on the numbers $\mom{\gamma}{2}(b)$
are derived from bounds on $\mom{\gamma}{1}(a)$. For the one-dimensional case
we give a symmetric estimate where a bound on $\mom{1}{2}(b)$
is given in terms of the number $\mom{1}{2}(a)$.

\begin{theo}
\label{one_theo}
Let $a \in \momspace{1}_2(\Z)$ be such that $0 < A \leq \abs{\hat{a}} \leq B < \infty$,
for some constants $A,B$. Let $b \in \ell^2(\Z)$ be defined by $\hat{b}:=1/\hat{a}$.
Then $b \in \momspace{1}_2(\Z)$. Moreover, the following estimates hold:
\begin{align*}
&\mom{1}{2}(b) \leq 1/A^2 \mom{1}{2}(a),
\\
&\norm{b}_1 \leq \frac{1}{A} + \frac{\pi}{A^2\sqrt{3}} \mom{1}{2}(a).
\end{align*}
\end{theo}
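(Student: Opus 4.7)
The plan is to translate the sequence-level statements into Sobolev-space statements on the torus through the Fourier transform, and then read the desired bounds off of the chain rule for $1/z$. Concretely, the condition $a\in\momspace{1}_2(\Z)$ is equivalent to $\hat{a}\in W^{1,2}(I)$, and by the intertwining identity $D\hat{a}=2\pi i\,\widehat{M^1(a)}$ together with Plancherel one has the clean identification
\[
\|D\hat{a}\|_{L^2(I)} = 2\pi\,\mom{1}{2}(a),
\qquad
\|D\hat{b}\|_{L^2(I)} = 2\pi\,\mom{1}{2}(b).
\]
Since $|\hat{a}|\ge A>0$ and, in dimension one, $W^{1,2}(I)$ embeds into $C(I)$, the function $\hat{b}=1/\hat{a}$ is well defined, bounded, and belongs to $W^{1,2}(I)$; the chain rule gives $D\hat{b}=-D\hat{a}/\hat{a}^2$.

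For the first inequality I would simply estimate pointwise
\[
|D\hat{b}(x)| \;\le\; \frac{|D\hat{a}(x)|}{|\hat{a}(x)|^2} \;\le\; \frac{1}{A^2}|D\hat{a}(x)|,
\]
take $L^2$ norms, and divide by $2\pi$ to conclude $\mom{1}{2}(b)\le A^{-2}\mom{1}{2}(a)$. This already proves that $b\in\momspace{1}_2(\Z)$.

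For the $\ell^1$-bound I would split $\|b\|_1=|b_0|+\sum_{k\neq 0}|b_k|$. The zeroth coefficient is handled by $|b_0|=\bigl|\int_0^1\hat{b}\bigr|\le\|\hat{b}\|_{L^\infty}\le 1/A$. For the tail I would apply Cauchy–Schwarz against the weight $1/|k|$:
\[
\sum_{k\neq 0}|b_k|
\;=\;\sum_{k\neq 0}\frac{1}{|k|}\,|k b_k|
\;\le\;\Bigl(\sum_{k\neq 0}\tfrac{1}{k^2}\Bigr)^{\!1/2}\mom{1}{2}(b)
\;=\;\frac{\pi}{\sqrt{3}}\,\mom{1}{2}(b),
\]
using $\sum_{k\neq 0}k^{-2}=\pi^2/3$. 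Combining with the first estimate yields the announced bound $\|b\|_1\le 1/A+\frac{\pi}{A^2\sqrt{3}}\mom{1}{2}(a)$.

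No step looks genuinely difficult; the only subtlety worth addressing explicitly is justifying the chain rule for $1/\hat{a}$, which rests on the fact that $\hat{a}$ is continuous and uniformly bounded away from zero, so composition with the smooth map $z\mapsto 1/z$ preserves $W^{1,2}(I)$ and commutes with weak differentiation.
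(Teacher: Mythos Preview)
Your proposal is correct and follows essentially the same route as the paper: pass to the Fourier side, invoke the chain rule $(1/f)'=-f'/f^2$ (which the paper isolates as Corollary~\ref{deriv_inv}), bound $\|(1/f)'\|_{L^2}$ pointwise by $A^{-2}\|f'\|_{L^2}$, and then derive the $\ell^1$ bound by splitting off $b_0$ and applying Cauchy--Schwarz with $\sum_{k\ne 0}k^{-2}=\pi^2/3$. The only cosmetic difference is that the paper bounds $|b_0|$ via $\|b\|_\infty\le\|1/\hat a\|_{L^1}\le 1/A$ rather than via $b_0=\int_0^1\hat b$, which is immaterial.
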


% to do: IMPROVE THE LOOK OF THE FORMULA
\begin{coro}
\label{one_invariance}
The inclusion
\[
\momspace{\alpha}_2(\Z) \hookrightarrow B(\ell^2(\Z)),
\quad a \mapsto a*-
\]
is well-defined and preserves the spectrum of each element.
\end{coro}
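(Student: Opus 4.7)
The plan is the classical Wiener-type route: first show that $\momspace{\alpha}_2(\Z)$ is a unital Banach algebra under convolution that embeds continuously into $B(\ell^2(\Z))$, and then establish inverse-closedness. Spectrum preservation is then automatic, since a continuous unital algebra homomorphism always contracts spectra and the reverse inclusion is precisely inverse-closedness. I tacitly assume $\alpha \geq 1$ throughout, since otherwise $\momspace{0}_2(\Z) = \ell^2(\Z)$ and the map $a \mapsto a*-$ is not even well-defined into $B(\ell^2(\Z))$.

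For the algebra structure I would pass everything through the Fourier transform: $\momspace{\alpha}_2(\Z)$ corresponds as a Banach space to the classical Sobolev space $H^\alpha(I)$. In dimension one, the embedding $H^\alpha(I) \hookrightarrow L^\infty(I)$ (valid for $\alpha \geq 1$), combined with the Leibniz rule, shows that $H^\alpha(I)$ is a Banach algebra under pointwise multiplication, and hence $\momspace{\alpha}_2(\Z)$ is a Banach algebra under convolution, with unit $\delta_0$ corresponding to the constant function $1 \in H^\alpha(I)$. Continuity of the inclusion into $B(\ell^2(\Z))$ follows from $\norm{a*-}_{B(\ell^2(\Z))} = \norm{\hat a}_\infty$ and the same Sobolev bound.

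The heart of the argument is inverse-closedness. Suppose $a \in \momspace{\alpha}_2(\Z)$ and $a*-$ is invertible in $B(\ell^2(\Z))$, equivalently $0 < A \leq \abs{\hat a} \leq B < \infty$ almost everywhere, and let $\hat b := 1/\hat a$. I want to show $b \in \momspace{\alpha}_2(\Z)$, which I would do by induction on $\alpha$. The base case $\alpha = 1$ is exactly Theorem \ref{one_theo}. For the inductive step, apply the Leibniz rule to $\hat a \hat b \equiv 1$ and isolate the top-order derivative:
\[
\hat b^{(\alpha)} = -\hat b \sum_{\gamma=1}^{\alpha} \binom{\alpha}{\gamma} \hat a^{(\gamma)} \hat b^{(\alpha-\gamma)}.
\]
Each summand lies in $L^2$: when $\gamma = \alpha$, $\hat a^{(\alpha)} \in L^2$ meets $\hat b \in L^\infty$; when $1 \leq \gamma \leq \alpha-1$, Sobolev embedding promotes $\hat a^{(\gamma)} \in H^{\alpha-\gamma}(I)$ to $L^\infty(I)$ (since $\alpha-\gamma \geq 1$), while $\hat b^{(\alpha-\gamma)} \in L^2$ comes from the inductive hypothesis. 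Multiplying by the outer factor $\hat b \in L^\infty$ preserves $L^2$-membership, so $\hat b^{(\alpha)} \in L^2$ and $b \in \momspace{\alpha}_2(\Z)$.

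The step I expect to require the most care is the mixed term $\hat a^{(\gamma)} \hat b^{(\alpha-\gamma)}$, where a priori both factors live only in $L^2$; it is precisely the one-dimensional Sobolev embedding that rescues the argument by promoting one factor to $L^\infty$. This also explains why the multidimensional Corollary \ref{multi_invariance} is phrased in terms of the more restrictive class $\momspace{\alpha}_{op}$ rather than $\momspace{\alpha}_2$, since the analogous Sobolev promotion fails in higher dimensions without additional regularity.
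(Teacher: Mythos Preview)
Your plan is correct and, for $\alpha=1$, essentially identical to the paper's argument: well-definedness comes from the one-dimensional Sobolev embedding $H^1(I)\hookrightarrow C(I)\subseteq L^\infty(I)$ (the paper phrases this as H\"older-$1/2$ continuity of $\hat a$), and inverse-closedness comes from Theorem~\ref{one_theo} once invertibility of $a*-$ on $\ell^2$ forces $\abs{\hat a}$ to be bounded below.

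Two remarks on the comparison. First, the paper's own proof treats \emph{only} the case $\alpha=1$; the symbol $\alpha$ in the stated corollary is almost certainly a misprint for~$1$, since the proof invokes nothing beyond Theorem~\ref{one_theo} and never mentions higher moments. Your inductive extension to all integers $\alpha\geq 1$ is a correct addition that the paper does not supply. Second, you frame the result more abstractly (unital Banach-algebra homomorphism plus inverse-closedness implies equal spectra), whereas the paper just checks inverse-closedness directly; both routes are standard and equivalent here.

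One minor technical wrinkle to watch when you write the induction out in full: the order-$\alpha$ Leibniz expansion of $(\hat a\,\hat b)^{(\alpha)}=0$ presupposes that $\hat b^{(\alpha)}$ already exists, which is precisely what you want to conclude. The clean way around this is to argue instead from the first-order identity $\hat b' = -\hat a'\,\hat b^{\,2}$ (Corollary~\ref{deriv_inv}): since $\hat a'\in H^{\alpha-1}(I)$, $\hat b\in H^{\alpha-1}(I)$ by the inductive hypothesis, and $H^{\alpha-1}(I)$ is an algebra in dimension one for $\alpha\geq 2$, you get $\hat b'\in H^{\alpha-1}(I)$ and hence $\hat b\in H^{\alpha}(I)$ without circularity.
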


\section{Proofs}
\label{proofs}
In order to prove the results announced in Section \ref{results}
we will need to introduce some facts about Sobolev algebras over the torus.
The following statements are adaptations to our needs of well-known facts.
\subsection{Chain rule and functional calculus}
\begin{lemma}
\label{chain}
Let $F: \R^2 \to \R$ be a $C^\infty$ compactly supported function.
Let $f:I^d \to \R^2$, $f=(f_1, f_2)$ be a measurable, locally integrable
function having a (locally integrable) weak derivative with respect to
$x_j$, ($1 \leq j \leq d$). Then $F \circ f$ has a weak derivative
with respect to $x_j$, given by
\[
(\deriv{x_1}F \circ f) \deriv{x_j}f_1 + (\deriv{x_2}F \circ f) \deriv{x_j}f_2.
\]
\end{lemma}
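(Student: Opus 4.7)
The plan is to reduce Lemma \ref{chain} to the classical (smooth) chain rule by mollification and then pass to the limit in the weak formulation. Fix a standard mollifier $\rho_\epsilon$ and, after extending $f_1, f_2$ to a slightly larger neighborhood if necessary, set $f_i^\epsilon := f_i * \rho_\epsilon$ for $i=1,2$ on any compact $K \subset I^d$. Standard properties of mollification give that $f_i^\epsilon$ is smooth, $f_i^\epsilon \to f_i$ in $L^1(K)$ and (after passing to a subsequence $\epsilon_n \to 0$) a.e.\ on $K$; moreover, since $\partial_{x_j} f_i$ is assumed locally integrable, $\partial_{x_j} f_i^\epsilon = (\partial_{x_j} f_i) * \rho_\epsilon \to \partial_{x_j} f_i$ in $L^1(K)$.

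For each $\epsilon$, because $f^\epsilon = (f_1^\epsilon, f_2^\epsilon)$ is smooth the classical chain rule holds pointwise:
\[
\partial_{x_j}(F \circ f^\epsilon) = (\partial_{x_1} F \circ f^\epsilon)\, \partial_{x_j} f_1^\epsilon + (\partial_{x_2} F \circ f^\epsilon)\, \partial_{x_j} f_2^\epsilon.
\]
Testing against an arbitrary $\varphi \in C_c^\infty(I^d)$, say with support in $K$, and integrating by parts gives
\[
-\int_{I^d} F(f^\epsilon)\, \partial_{x_j} \varphi = \int_{I^d} \Big[(\partial_{x_1} F \circ f^\epsilon)\, \partial_{x_j} f_1^\epsilon + (\partial_{x_2} F \circ f^\epsilon)\, \partial_{x_j} f_2^\epsilon\Big] \varphi.
\]
The remaining task is to send $\epsilon_n \to 0$ on both sides and recover the tested identity that characterises the claimed weak derivative.

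The left-hand side is immediate: $F$ is bounded with compact support, so $F(f^{\epsilon_n}) \to F(f)$ a.e.\ and boundedly, and dominated convergence yields convergence of the integral. For the right-hand side the main point is to handle each product, which is of the form (uniformly bounded, a.e.\ convergent) $\times$ ($L^1$-convergent). The standard decomposition
\[
(\partial_{x_i} F \circ f^{\epsilon_n})\, \partial_{x_j} f_i^{\epsilon_n} - (\partial_{x_i} F \circ f)\, \partial_{x_j} f_i = (\partial_{x_i} F \circ f^{\epsilon_n})(\partial_{x_j} f_i^{\epsilon_n} - \partial_{x_j} f_i) + (\partial_{x_i} F \circ f^{\epsilon_n} - \partial_{x_i} F \circ f)\, \partial_{x_j} f_i
\]
bounds the $L^1(K)$ norm of the first summand by $\norm{\partial_{x_i} F}_\infty \norm{\partial_{x_j} f_i^{\epsilon_n} - \partial_{x_j} f_i}_{L^1(K)} \to 0$, while the second tends to zero by dominated convergence with majorant $2\norm{\partial_{x_i} F}_\infty \abs{\partial_{x_j} f_i}$.

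The main obstacle is precisely this product-convergence step, which relies on choosing a single subsequence along which both $f_1^{\epsilon_n}$ and $f_2^{\epsilon_n}$ converge a.e., so that the compositions with the bounded continuous functions $F, \partial_{x_1} F, \partial_{x_2} F$ converge a.e.\ simultaneously. Once this is arranged, passage to the limit produces
\[
-\int_{I^d} F(f)\, \partial_{x_j} \varphi = \int_{I^d} \Big[(\partial_{x_1} F \circ f)\, \partial_{x_j} f_1 + (\partial_{x_2} F \circ f)\, \partial_{x_j} f_2\Big] \varphi
\]
for every $\varphi \in C_c^\infty(I^d)$, which is exactly the assertion of the lemma.
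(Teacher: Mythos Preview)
Your proof is correct and follows essentially the same approach as the paper: approximate $f$ by smooth functions, apply the classical chain rule, and pass to the limit in the weak formulation using the same product decomposition. The only minor difference is that for the term $(\partial_{x_i} F \circ f^{\epsilon_n} - \partial_{x_i} F \circ f)\, \partial_{x_j} f_i$ you invoke dominated convergence directly with majorant $2\norm{\partial_{x_i} F}_\infty \abs{\partial_{x_j} f_i}$, whereas the paper argues via Egorov's theorem and absolute continuity of the integral; your route is slightly cleaner.
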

\begin{proof}
Let $g:= (\deriv{x_1}F \circ f) \deriv{x_j}f_1 + (\deriv{x_2}F \circ f)
\deriv{x_j}f_2$.
Observe that, since the derivatives of $F$ are bounded, $g$ is locally
integrable.

Given a smooth function $\varphi: I^d \to \R$ which has compact support $K$
inside the interior of $I^d$, we must show that
\[
\int_{I^d} (F \circ f) \deriv{x_j} \varphi = - \int_{I^d} g \varphi.
\]

There exists two sequences of smooth compactly supported functions
$\set{f^n_1}_n, \set{f^n_2}_n$, such that, for $i=1,2$,
$f^n_i \longrightarrow_n f_i$ and $\deriv{x_j}f^n_i \longrightarrow_n
\deriv{x_j}f_i$,
both in $L^1(K)$ and almost everywhere.
Let us call $f^n:=(f^1, f^2)$ and
$g^n:=(\deriv{x_1}F \circ f^n) \deriv{x_j}f^n_1 + (\deriv{x_2}F \circ f^n)
\deriv{x_j}f^n_2$.

Since all the functions involved are smooth, for each $n$, the following
equation
holds,
\[
\int_{I^d} (F \circ f^n) \deriv{x_j} \varphi = - \int_{I^d} g^n \varphi.
\]
Because $\varphi$ and $\deriv{x_j} \varphi$ are bounded and supported in $K$,
if we show that $F \circ f^n \longrightarrow_n F \circ f$
and $g^n \longrightarrow_n g$ in $L^1(K)$, the lemma will follow by letting
$n \longrightarrow \infty$.

Since $f^n_i \longrightarrow_n f_i$ in $L^1(K)$ and $F$ is Lipschitz,
it follows that $F \circ f^n \longrightarrow_n F \circ f$ in $L^1(K)$.

For $i=1,2$, let us estimate,
\begin{align*}
&\abs{(\deriv{x_i}F \circ f^n) \deriv{x_j} f^n_i - (\deriv{x_i}F \circ f)
\deriv{x_j} f_i}
\\
&\qquad \leq
\abs{\deriv{x_i}F \circ f^n - \deriv{x_i}F \circ f} \abs{\deriv{x_j}f_i}
+
\abs{\deriv{x_i}F \circ f^n} \abs{\deriv{x_j}f^n_i-\deriv{x_j}f_i}.
\end{align*}
The second term tend to $0$ in $L^1(K)$ because $\deriv{x_i}F$ is bounded.

For the first term, consider a measurable subset $E \subset K$, and
let us estimate,
\begin{align}
\nonumber
&\int_K \abs{\deriv{x_i}F \circ f^n - \deriv{x_i}F \circ f} \abs{\deriv{x_j}f_i}
\\
\nonumber
&\qquad= \int_E \abs{\deriv{x_i}F \circ f^n - \deriv{x_i}F \circ f} \abs{\deriv{x_j}f_i}
+ \int_{K \setminus E} \abs{\deriv{x_i}F \circ f^n - \deriv{x_i}F \circ f} \abs{\deriv{x_j}f_i}
\\
\label{chain_estimate}
&\qquad \leq
\sup_E \abs{\deriv{x_i}F \circ f^n - \deriv{x_i}F \circ f} \int_K
\abs{\deriv{f_i}}+2 \norm{\deriv{x_i}F}_\infty \int_{K \setminus E}
\abs{\deriv{x_j}f_i}.
\end{align}
Given $\varepsilon>0$, by the absolute continuity of the integral,
if $E$ is chosen such that the measure of $K \setminus E$ is sufficiently small,
the last term can be made smaller than $\varepsilon/2$ . Egorov's
Theorem grants the existence of such a set $E$ where, in addition,
$\deriv{x_i}F \circ f^n \longrightarrow_n \deriv{x_i}F \circ f$ uniformly.
Therefore, if $n$ if sufficiently big, the first term in \eqref{chain_estimate}
is less that $\varepsilon/2$. This completes the proof.
\end{proof}

We now prove a formula for the derivatives of the pointwise inverse
of a function. We would like to apply the previous lemma to the real and imaginary part
of the complex inversion $1/z$. Since they do not meet the hypothesis of the lemma,
we will use suitably localized versions of them.

\begin{coro}
\label{deriv_inv}
Let $f:I^d \to \C$, be a measurable function satisfying
$0 < A \leq \abs{f} \leq B < \infty$, for some constants $A,B$.
Suppose that $f$ has a (locally integrable) weak derivative with respect to
$x_j$, ($1 \leq j \leq d$). Then $1/f$ has a weak derivative
with respect to $x_j$, given by
\[
\deriv{x_j}(1/f) = -\deriv{x_j}(f) / f^2.
\]
\end{coro}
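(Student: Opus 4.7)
The plan is to apply Lemma \ref{chain} to smooth compactly supported localizations of the real and imaginary parts of the complex inversion $z \mapsto 1/z$. Identifying $\C$ with $\R^2$, write $f = (f_1,f_2)$ and consider the real-valued maps
\[
G_1(z_1,z_2) = \frac{z_1}{z_1^2+z_2^2}, \qquad G_2(z_1,z_2) = \frac{-z_2}{z_1^2+z_2^2},
\]
so that $G_1\circ f = \mathrm{Re}(1/f)$ and $G_2\circ f = \mathrm{Im}(1/f)$ almost everywhere. These $G_i$ are smooth only away from the origin and are not compactly supported, so Lemma \ref{chain} does not apply to them directly.

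To fix this I would fix a cutoff $\chi \in C^\infty_c(\R^2)$ with $\chi \equiv 1$ on the annulus $\set{z \in \R^2 : A \leq \abs{z} \leq B}$ and $\chi$ vanishing in a neighborhood of $0$, and set $F_i := \chi G_i \in C^\infty_c(\R^2)$. The hypothesis $A \leq \abs{f} \leq B$ forces the image of $f$ to lie entirely in the annulus where $\chi \equiv 1$, so $F_i \circ f = G_i \circ f$ and $(\deriv{x_k} F_i)\circ f = (\deriv{x_k} G_i)\circ f$ almost everywhere. Applying Lemma \ref{chain} with $F_1$ and $F_2$ (and the same $f_1,f_2,x_j$) then yields locally integrable weak $\deriv{x_j}$-derivatives of $\mathrm{Re}(1/f)$ and $\mathrm{Im}(1/f)$, expressed through the partials of $G_1,G_2$ evaluated at $f$.

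The last step is to recombine the two real identities into the single complex one. Treating $G_1 + iG_2$ as the complex-valued function $1/z$ of the real variables $z_1,z_2$ (with $z = z_1 + iz_2$), a direct differentiation gives $\deriv{z_1}(1/z) = -1/z^2$ and $\deriv{z_2}(1/z) = -i/z^2$, and summing with the appropriate factors yields
\[
\deriv{x_j}\!\left(\tfrac{1}{f}\right) = -\tfrac{1}{f^2}\bigl(\deriv{x_j}f_1 + i\,\deriv{x_j}f_2\bigr) = -\tfrac{\deriv{x_j}(f)}{f^2}.
\]
Local integrability of the right-hand side is automatic since $\abs{1/f^2} \leq 1/A^2$. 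The only real obstacle is that complex inversion is neither smooth at $0$ nor compactly supported globally; this is precisely what the cutoff $\chi$ neutralizes, using only the uniform two-sided bound on $\abs{f}$.
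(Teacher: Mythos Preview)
Your argument is essentially the paper's own: localize the complex inversion by a smooth cutoff supported away from $0$, apply Lemma \ref{chain} to each real component, and recombine (the paper invokes the Cauchy--Riemann equations where you compute $\partial_{z_k}(1/z)$ directly, which is the same computation). One minor point: take $\chi \equiv 1$ on an open \emph{neighborhood} of the closed annulus rather than just on the annulus itself, so that $(\partial_{x_k}F_i)\circ f = (\partial_{x_k}G_i)\circ f$ holds even where $\abs{f}$ attains the extreme values $A$ or $B$ on a set of positive measure.
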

\begin{proof}
Let us set $f = f_1+f_2i$, where $f_1, f_2:I^d \to \R$.

Let $G=(G_1,G_2): \R^2 \setminus \set{0} \to \R^2$ be defined by $G(x,y):= (x^2
+ y^2)^{-1} (x, -y)$, so under the identification $\R^2 = \C$, $G$ is the complex inversion.
Let $\eta: \R^2 \to \R$ be a smooth compactly supported function such that $\eta \equiv 1$ on a
neighborhood of the crown $C:=\set{(x,y) \in \R^2: A \leq \norm{(x,y)}_2 \leq B}$, not containing 0.

The function $F=(F_1,F_2)$ defined by $F(x,y):=G(\eta(x,y)x,\eta(x,y)y)$ is
smooth, compactly supported and agrees with $G$ on a neighborhood of the crown $C$.
Observe that on $C$, the partial derivatives of $F$ agree with those of $G$ as well.

Consequently $F \circ f= G \circ f = 1/f=u+vi$, where $u,v:I^d \to \R$.
We can apply Lemma \ref{chain}, to find that $u$ and $v$ have weak derivatives
with respect to $x_j$ given by
\begin{align*}
\deriv{x_j}u &= (\deriv{x_1}F_1 \circ f) \deriv{x_j}f_1 + (\deriv{x_2}F_1 \circ f) \deriv{x_j}f_2
= (\deriv{x_1}G_1 \circ f) \deriv{x_j}f_1 + (\deriv{x_2}G_1 \circ f) \deriv{x_j}f_2,
\\
\deriv{x_j}v &= (\deriv{x_1}F_2 \circ f) \deriv{x_j}f_1 + (\deriv{x_2}F_2 \circ f) \deriv{x_j}f_2
= (\deriv{x_1}G_2 \circ f) \deriv{x_j}f_1 + (\deriv{x_2}G_2 \circ f) \deriv{x_j}f_2.
\end{align*}
Since $G$ is a holomorphic function, Cauchy-Riemann equations say that
$\deriv{x_2}(G^1)=-\deriv{x_1}(G^2)$ and $\deriv{x_1}(G^1)=\deriv{x_2}(G^2)$.
This yields,
\begin{align*}
\deriv{x_j}(1/f) &= \deriv{x_j}u+\deriv{x_j}vi
\\
&=
(\deriv{x_1}G_1 \circ f)(\deriv{x_j}f_1+\deriv{x_j}f_2i)
+
(\deriv{x_1}G_2 \circ f)i(\deriv{x_j}f_1+\deriv{x_j}f_2i)
\\
&= (\deriv{x_1}G \circ f) \deriv{x_j}f.
\end{align*}
A direct computation now shows now that $\deriv{x_j}(1/f)= -\deriv{x_j}f/f^2$.
\end{proof}

\subsection{Some facts about Sobolev algebras}
For $1 \leq j \leq d$ and $n \geq 0$, let us define the $j$-directional
Sobolev algebra by,
\[
\sob{j,n}=\sob{j,n}(I^d) := \set{f:I^d \to \C/ \de{ke_j}(f) \in L^\infty(I^d),
\forall 0 \leq k \leq n},
\]
where the derivatives are distributional.
For a multi-index $\alpha \in \N_0^d$, let us define
\[
\sob{\alpha}=\sob{\alpha}(I^d) := \bigcap_{j=1}^d \sob{j,\alpha_j}(I^d).
\]
Observe that $\momspace{\alpha}_{op}=\mathcal{F}^{-1}(\sob{\alpha})$.
\begin{lemma}
\label{product_rule}
\mbox{}

\begin{itemize}
\item[(a)] If for some index $j$, $f,g \in \sob{j,1}$, then $fg \in \sob{j,1}$
and
$\deriv{x_j}(fg)=\deriv{x_j}(f)g+f\deriv{x_j}(g)$.
\item[(b)] If for some indexes $j,n$, $f,g \in \sob{j,n}$, then $fg \in \sob{j,n}$.
Moreover, the following formula holds.
\[
\de{ne_j}(fg)=\sum_{h=0}^n \binom{n}{h} \de{he_j}f \de{(n-h)e_j}g.
\]
\item[(c)] If for some multi-index $\alpha$, $f,g \in \sob{\alpha}$, then $fg \in \sob{\alpha}$.
Moreover, the following formula holds.
\[
\de{\alpha}(fg)=\sum_{\beta \leq \alpha} \binom{\alpha}{\beta} \de{\beta}f \de{\alpha-\beta}g.
\]
\end{itemize}
\end{lemma}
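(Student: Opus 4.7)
The plan is to prove the three parts in order, with (a) doing the real analytic work and (b), (c) following by combinatorial induction.

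For part (a), I would mollify. Since $f, g \in L^\infty(I^d)$ and $\partial_{x_j} f, \partial_{x_j} g \in L^\infty(I^d)$, convolving with a smooth approximate identity $\rho_n$ (viewing $I^d$ as the torus, which is consistent with the paper's Fourier setup) yields $f^n := \rho_n * f$ and $g^n := \rho_n * g$ that are smooth, uniformly bounded in $L^\infty$, and converge to $f, g$ in $L^1(I^d)$ and almost everywhere; the standard fact $\partial_{x_j}(\rho_n * f) = \rho_n * \partial_{x_j} f$ gives the same convergence for the directional derivatives. The classical product rule applies to $f^n g^n$, so for any smooth $\varphi$ with compact support in the interior of $I^d$,
\[
\int_{I^d} f^n g^n \, \partial_{x_j} \varphi = - \int_{I^d} \bigl( \partial_{x_j}(f^n)\, g^n + f^n\, \partial_{x_j}(g^n) \bigr) \varphi.
\]
Dominated convergence (using the uniform $L^\infty$ bounds and the boundedness and compact support of $\varphi, \partial_{x_j}\varphi$) lets me pass to the limit on both sides, which identifies $\partial_{x_j}(f)g + f\partial_{x_j}(g) \in L^\infty(I^d)$ as the weak $x_j$-derivative of $fg$.

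For part (b), I would induct on $n$. The case $n = 1$ is part (a). Assuming the formula for order $n-1$, each summand $\de{he_j} f \cdot \de{(n-1-h)e_j} g$ is a product of two $\sob{j,1}$ functions (since $f, g \in \sob{j,n}$ means each of these factors lies in $\sob{j,1}$), so part (a) applies to each. Combining the resulting expressions and using Pascal's identity $\binom{n-1}{h-1} + \binom{n-1}{h} = \binom{n}{h}$ to reindex gives the Leibniz formula of order $n$.

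For part (c), I would iterate part (b) across the $d$ coordinate directions. Since mixed weak partial derivatives commute (on the class $\sob{\alpha}$, as follows from applying part (b) repeatedly and checking that all intermediate factors lie in the appropriate $\sob{j,k}$), one can compute $\de{\alpha}(fg)$ by first applying $\de{\alpha_1 e_1}$ via (b), then $\de{\alpha_2 e_2}$, and so on. The product over $i$ of the one-dimensional binomial coefficients $\binom{\alpha_i}{\beta_i}$ is, by definition, $\binom{\alpha}{\beta}$, so collecting terms produces exactly the claimed formula.

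The main obstacle is part (a), specifically the justification that mollification gives derivatives converging in a strong enough sense to pass to the limit; this is standard but demands care at the boundary of $I^d$, which is why working in the torus (periodic) picture is convenient. The combinatorial content of (b) and (c) is then essentially a bookkeeping exercise, provided one verifies at each inductive step that all factors remain in the appropriate directional Sobolev algebra so that part (a) can be reapplied.
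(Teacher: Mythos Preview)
Your proposal is correct and follows essentially the same route as the paper: part (a) is the standard mollification argument (the paper simply cites Evans--Gariepy for this step rather than writing it out), and parts (b) and (c) follow by induction on the order and then across coordinate directions, exactly as you describe. The only cosmetic difference is that the paper starts the induction for (b) at $n=0$ rather than $n=1$.
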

\begin{rem}
\label{conv_alg}
This lemma implies that $\momspace{\alpha}_{op}=\mathcal{F}^{-1}(\sob{\alpha})$
is a convolution algebra.
\end{rem}
\begin{proof}
% CHECK: CONVERGENCE OF REGULARIZATION IN ONLY ONE DIRECTION
(a) This is a slight variation of \cite[pg 129, Theorem 4 (i)]{evga92},
the proof given there also applies to this case.

(b) To prove that $fg \in \sob{j,n}$, we proceed by induction on $n$.
The case $n=0$ is clear. Suppose that
$f,g \in \sob{j,n}$, for some $n \geq 1$. Then by (a),
$\deriv{x_j}(fg)=\deriv{x_j}(f)g+f\deriv{x_j}(g)$.
Since $\deriv{x_j}(f),g,f,\deriv{x_j}(g)$ all belong to $\sob{j,n-1}$, by the
inductive
hypothesis $\deriv{x_j}(fg) \in \sob{j,n-1}$ as well. Hence, $fg \in \sob{j,n}$.

The prescribed formula for the derivatives of $fg$ follow from (a) by induction.

(c) is an consequence of item (b); the formula can be proved similarly to that in (b).
\end{proof}

\begin{lemma}
\label{sob_inv}
Suppose that $f:I^d \to \C$ is such that $0 < A \leq \abs{f} \leq B < \infty$ holds
for some constants $A,B$.
\begin{itemize}
\item[(a)] If for some indexes $j,n$, $f \in \sob{j,n}$, then $1/f \in \sob{j,n}$.
\item[(b)] If for some multi-index $\alpha$, $f \in \sob{\alpha}$, then $1/f \in \sob{\alpha}$.
\end{itemize}
\end{lemma}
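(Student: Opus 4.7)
The plan is to prove (a) by induction on $n$; part (b) will then follow immediately from (a), since by definition $\sob{\alpha}=\bigcap_{j=1}^d \sob{j,\alpha_j}$, so if $f\in \sob{\alpha}$ then $f\in \sob{j,\alpha_j}$ for every $j$, and applying (a) in each direction separately gives $1/f\in \sob{j,\alpha_j}$ for every $j$, i.e.\ $1/f\in \sob{\alpha}$.

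For part (a), the base case $n=0$ is immediate: the hypothesis gives $\abs{1/f}\leq 1/A$, hence $1/f\in L^\infty(I^d)=\sob{j,0}$. For the inductive step, I assume the statement for $n-1$ and suppose $f\in \sob{j,n}$ with $0<A\leq \abs{f}\leq B<\infty$. Corollary \ref{deriv_inv} supplies the weak derivative
\[
\deriv{x_j}(1/f) = -\deriv{x_j}(f)/f^2,
\]
so the proof reduces to checking that this right-hand side lies in $\sob{j,n-1}$; indeed, an element of $\sob{j,n}$ is precisely one whose weak $x_j$-derivative belongs to $\sob{j,n-1}$, and combining this with $1/f\in L^\infty$ will close the induction.

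To verify that the right-hand side lies in $\sob{j,n-1}$, I would assemble three ingredients. First, the inductive hypothesis applied to $f\in \sob{j,n-1}$ (which holds since $\sob{j,n}\subset \sob{j,n-1}$, while the bounds on $\abs{f}$ are unchanged) gives $1/f\in \sob{j,n-1}$. Second, Lemma \ref{product_rule}(b) then yields $(1/f)^2=(1/f)(1/f)\in \sob{j,n-1}$. Third, $\deriv{x_j}(f)\in \sob{j,n-1}$, which is simply the content of $f\in \sob{j,n}$ read one derivative down. A final application of Lemma \ref{product_rule}(b) shows that the product $-\deriv{x_j}(f)\cdot(1/f)^2$ belongs to $\sob{j,n-1}$, completing the induction.

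No step is genuinely difficult; the one thing to be careful about is that Corollary \ref{deriv_inv} only produces a single weak derivative, so one must differentiate once with that corollary and then use the product-rule/algebra structure of $\sob{j,n-1}$ to handle the remaining $n-1$ derivatives, rather than attempting to differentiate the quotient $-\deriv{x_j}(f)/f^2$ directly $n-1$ more times (which would require reproving a quotient rule of higher order). The recursive structure of the argument is what forces the induction to proceed in the order described.
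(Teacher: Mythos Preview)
Your proof is correct and follows essentially the same inductive strategy as the paper's proof: induction on $n$ for part (a), with Corollary \ref{deriv_inv} supplying the first derivative and Lemma \ref{product_rule} handling the algebra, and (b) deduced from (a) via the definition of $\sob{\alpha}$. The only cosmetic difference is that the paper obtains $1/f^2\in \sob{j,n-1}$ by first forming $f^2$ via the product rule and then applying the inductive hypothesis to $f^2$ (noting $A^2\leq |f^2|\leq B^2$), whereas you apply the inductive hypothesis to $f$ and then square $1/f$; these are interchangeable.
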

\begin{proof}
For (a), we proceed by induction on $n$. If $n=0$, the conclusion is evident. Suppose that
$f \in \sob{n+1,j}$, for some $n \geq 0$. Then, by Corollary \ref{deriv_inv},
$1/f$ has a weak derivative with respect to $x_j$
given by $\deriv{x_j}(1/f)=-\deriv{x_j}f f^{-2}$.

Since $0 < A^2 \leq \abs{f^2} \leq B^2 < \infty$ holds, and by Lemma \ref{product_rule},
$f^2 \in \sob{n+1,j} \subseteq \sob{n,j}$, the inductive hypothesis implies that
$1/f^2 \in \sob{n,j}$. In addition, since $f \in \sob{n+1,j}$, 
it follows  that $\deriv{x_j}f \in \sob{n,j}$, so Lemma \ref{product_rule}
implies that $\deriv{x_j}(1/f) \in \sob{n,j}$. This shows that $1/f \in \sob{n+1,j}$.

(b) is a direct consequence of (a).
\end{proof}

\subsection{Proofs}
Now we can prove the results announced in Section \ref{results}.
\begin{proof}[Proof of Theorem \ref{multi_theo}]
Let $f:=\hat{a}$ and $g:=\hat{b}=1/f$.
Since $a \in \momspace{\alpha}_{op}$, $f \in \sob{\alpha}$. By Lemma \ref{sob_inv}, we know that
$g \in \sob{\alpha}$, so $b \in \momspace{\alpha}_{op}$.

First observe that $\mom{0}{op}(b) = \norm{g}_\infty \leq 1/A$.
Let $0 \not= \gamma \leq \alpha$. By \ref{product_rule} we have,
\[
0 = \de{\gamma}(gf)
= \sum_{\beta \leq \gamma} \binom{\gamma}{\beta} \de{\beta}g \de{\gamma-\beta}f.
\]
Hence,
\[
\de{\gamma}g = -1/f \sum_{\beta < \gamma} \binom{\gamma}{\beta} \de{\beta}g \de{\gamma-\beta}f.
\]
This yields,
\[
\norm{\de{\gamma}g}_\infty \leq
1/A \sum_{\beta < \gamma} \binom{\gamma}{\beta} \norm{\de{\beta}g}_\infty \norm{\de{\gamma-\beta}f}_\infty.
\]
Consequently,
\[
(2 \pi)^{\abs{\gamma}} \mom{\gamma}{op}(b)
\leq
1/A \sum_{\beta < \gamma} \binom{\gamma}{\beta} (2 \pi)^{\abs{\beta}+\abs{\gamma-\beta}} \mom{\beta}{op}(b) \mom{\gamma-\beta}{op}(a).
\]
and the desired formula follows.
\end{proof}

\begin{proof}[Proof of Corollary \ref{multi_invariance}]
Suppose that $a \in \momspace{\alpha}_{op}(\Z^d)$ is such that
the convolution operator $a*\mbox{-}: \ltwo \to \ltwo$ is invertible.

The operator $\f (a*\mbox{-}) \f^{-1}: L^2 \to L^2$ is then invertible.
Since this is a multiplication operator with symbol $\hat{a}$, it
follows that $\abs{\hat{a}}$ must be bounded below.
Now Theorem \ref{multi_theo} implies the existence of
$b \in \momspace{\alpha}_{op}(\Z^d)$ such that $a*b = \delta$.
So $a$ is invertible in $\momspace{\alpha}_{op}(\Z^d)$.
\end{proof}

\begin{proof}[Proof of Theorem \ref{one_theo}]
Let us set $f:= \hat{a}$. Since $a \in \momspace{2}_2$, $f \in L^2(I)$
and $f$ has a weak derivative $f' \in L^2(I)$. Moreover, by Corollary
\ref{deriv_inv},
\[
(1/f)'=-f'/f^2.
\]
Consequently,
\begin{align*}
\mom{1}{2}(b) &= \norm{\momd{1}(b)}_2 = \norm{\f(\momd{1}(b))}_2
\\
&= 1/(2\pi) \norm{(1/f)'}_2 \leq 1/(2\pi) 1/A^2 \norm{f'}_2
\\
&= 1/A^2 \norm{\f(\momd{1}(a))}_2 = 1/A^2 \norm{\momd{1}(a)}_2
\\
&= 1/A^2 \mom{1}{2}(a).
\end{align*}

To prove the estimate on $\norm{b}_1$, first observe that
$\norm{b}_\infty \leq \norm{1/f}_{L^1} \leq 1/A$.
Finally let us estimate,
\begin{align*}
\sum_k \abs{b_k} &= \abs{b_0} + \sum_{k \not= 0} \abs{b_k}
\leq \frac{1}{A} + \left(\sum_{k \not= 0} 1/k^2 \right)^{1/2} \mom{1}{2}(b)
\\
&\leq \frac{1}{A} + \frac{\pi}{A^2\sqrt{3}} \mom{1}{2}(a).
\end{align*}

\end{proof}

\begin{proof}[Proof of Corollary \ref{one_invariance}]
First observe that if $a \in \momspace{1}_2(\Z)$, then
$\hat{a}$ has a weak derivative in $L^2(I)$.
Consequently, $\hat{a}$ is Holder continuous (with exponent 1/2)
and is therefore bounded on $I$. This shows that $a*-$ is a well-defined,
bounded operator on $\ell^2(\Z)$.

Suppose that $a \in \momspace{1}_2(\Z)$ is such that
the convolution operator $a*\mbox{-}: \ltwo \to \ltwo$ is invertible.
As noted in the proof of Corollary \ref{multi_invariance},
it follows that there exists constants $0<A \leq B < \infty$,
such that $A \leq \abs{\hat{a}} \leq B$. Now Theorem \ref{one_theo}
implies that $a$ is invertible in $\momspace{1}_2(\Z)$.
\end{proof}

\section{Applications}
\label{apps}
We now show how the estimates of the previous sections can be used to
give explicit localization bounds for spline-type spaces. We will work
in dimension 1 and use Theorem \ref{one_theo}.
Similar applications can be done for arbitrary dimension by means
of the iterative estimate given in Theorem \ref{multi_theo}. However
for clarity we restrict ourselves to dimension 1.

Given a measurable function $\varphi:\R \to \C$ and $1 \leq p \leq \infty$
we consider $S^p$, the $L^p$-closed linear span of the set translates
$\set{\varphi(\cdot-k):k\in\Z}$. We call $S^p$ a (principal) spline-type space
if this set of translates forms a $p$-Riesz sequence (see definition below)
and the generator function $\varphi$ satisfies some decay conditions described below.

When $p=2$, the hypothesis of the set $\set{\varphi(\cdot-k):k\in\Z}$
being a Riesz sequence can be studied by means of the \emph{gramian function}
$\sum_k \abs{\hat{\varphi}(\cdot-k)}^2$. As said in the introduction
it has been shown \cite{algr01} that inverse-closedness principles like Wiener's $1/f$
lemma can be used to extend the Riesz sequence conditions to the general values of $p$.
We will apply the estimates of Section \ref{results} to this end and then
obtain some qualitative consequences for the bounds.

Let us first introduce some definitions and notation to be used in this section.

We call a sequence of measurable functions $\set{f_k}_k$ a $p$-\emph{Riesz sequence}
if it is a $p$-Riesz basis of its $L^p$-closed linear span, that is, if
there exists constants $A_p, B_p >0$ such that the norm equivalence
\[
A_p \norm{c}_{\ell^p} \leq \norm{\sum_k c_k f_k}_{L^p} \leq B_p \norm{c}_{\ell^p}
\]
holds for all finitely supported sequences of complex numbers $c$. In this case
it follows that the series $\sum_k c_k f_k$ converges unconditionally in $L^p$
for any sequence $c \in \ell^p$ and the norm equivalence extends to these sequences.

When $p=2$, the Riesz sequence condition implies the existence of a bi-orthogonal
system $\set{g_k}_k$ (i.e. $\ip{f_k}{g_j}=\delta_{k,j}$) belonging to the
closed linear span of $\set{f_k}_k$. When the functions $f_k$ have the special structure
$f_k=f(\cdot-k)$, $k \in \Z$, the bi-orthogonal system has the same structure $g_k=g(\cdot-k)$
and we call $g$ the window dual to $f$.

We will consider the following constants,
\begin{align*}
K_\alpha &:= \sup_{x \in \R} (1+\abs{x})^\alpha
		\int_R (1+\abs{s})^{-\alpha} (1+\abs{x-s})^{-\alpha} ds
\mbox{, for $\alpha >1$,}
\\
W_\alpha &:= 2 \sum_{j \geq 1} j^{-\alpha}
\mbox{, for $\alpha >1$,}
\\
S_\alpha &:= \left( \sum_{k \in \Z} k^2 (1+\abs{k})^{-2\alpha} \right)^{1/2}
\mbox{, for $\alpha >3/2$.}
\end{align*}
The constant $S_\alpha$ is easily seen to be finite by comparing it to a $p$-harmonic
series with $p=2(\alpha-1)$.
$K_\alpha$ can be bounded by $(1+2^\alpha) \int_\R (1+\abs{s})^{-\alpha} ds$.

Let $\delta>0$ and $X \equiv \set{x_k}_{k \in \Z} \subset \R$ be a
countable indexed set of points. We call the number 
\[
N(X):=\sup_{k \in \Z} \#\set{j \in \Z: x_j \in [k,k+1)}
\]
the \emph{relative separation} of $X$ and we will say that $X$ is relatively
separated if $N(X) < \infty$.
We will say that the set $X$ is $\delta$-dense if the family of open intervals
$\set{(x_k-\delta, x_k+\delta)}_{k\in\Z}$ covers $\R$.

Given $\delta >0$ and a function $f:\R \to \C$, we define
its \emph{$\delta$-oscillation} by
\[
\osc_\delta(f)(x) := \sup_{\abs{x-y}\leq \delta} \abs{f(x)-f(y)}.
\]

For a measurable function $f:\R \to \C$ and $1 \leq p,q \leq \infty$
we consider the norm
\[
\norm{f}_{W(L^p,\ell^q)} := \norm{ \left( \norm{f}_{L^p([0,1]+j)} \right)_{j\in\Z} }_{\ell^q},
\]
and denote by $W(L^p,\ell^q)$ the space of all measurable functions with finite 
$\norm{\cdot}_{W(L^p,\ell^q)}$ norm.

The space $W(L^p,\ell^q)$ is an example of a wide class of spaces known as \emph{amalgam spaces}
or spaces of Wiener-type (see \cite{fe83}). We will require some well-know facts
that are special cases of the general multiplier theory developed in \cite{fe83}.
\begin{fact}
\label{amalgam_facts}
Let $1 \leq p \leq \infty$ and $g \in W(L^\infty, \ell^p)$.
\begin{itemize}
\item[(a)] If $f \in L^p(\R)$ and $c_k := \ip{f}{g(\cdot-k)}$, for $k \in \Z$,
then $\norm{c}_{\ell^p} \leq \norm{f}_{L^p} \norm{g}_{W(L^\infty,\ell^1)}$.
\item[(b)] If $c \in \ell^p(\Z)$ and $f = \sum_k c_k g(\cdot-k)$, then
$\norm{f}_{W(L^\infty,\ell^p)} \leq \norm{c}_{\ell^p} \norm{g}_{W(L^\infty,\ell^1)}$.
\item[(c)] If $X \equiv \set{x_k}_k$ is a relatively separated set of points,
then
\[
\norm{(f(x_k))_k}_{\ell^p} \leq N(X)^{1/p} \norm{f}_{W(L^\infty,\ell^p)}.
\]
\end{itemize}
\end{fact}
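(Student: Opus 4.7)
The plan is to reduce each estimate to an inequality for scalar sequences indexed by the unit intervals $[j,j+1]$, $j \in \Z$, and then apply Young's convolution inequality. Setting $G_m := \norm{g}_{L^\infty([m,m+1])}$, we have $\norm{G}_{\ell^1} = \norm{g}_{W(L^\infty,\ell^1)}$, so the underlying tool in (a) and (b) is the standard estimate $\norm{u*v}_{\ell^p} \leq \norm{u}_{\ell^p} \norm{v}_{\ell^1}$ for $u \in \ell^p$ and $v \in \ell^1$.

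For (a), I would bound $\abs{c_k} \leq \int_\R \abs{f(x)}\,\abs{g(x-k)} \, dx$ and split the integral over unit intervals. On each interval $[j,j+1]$, Hölder on a set of measure one gives $\int_{[j,j+1]} \abs{f}\,\abs{g(\cdot-k)} \leq \norm{f}_{L^p([j,j+1])} \cdot \norm{g}_{L^\infty([j-k,j-k+1])}$. Writing $F_j := \norm{f}_{L^p([j,j+1])}$, we obtain $\abs{c_k} \leq \sum_j F_j G_{j-k}$, so $\abs{c}$ is pointwise dominated by the convolution $F * \tilde{G}$ with $\tilde{G}_m := G_{-m}$. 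Applying Young to $F \in \ell^p$ and $\tilde{G} \in \ell^1$, and observing that $\norm{F}_{\ell^p} = \norm{f}_{L^p}$, yields the claim.

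For (b), the triangle inequality on each unit interval gives $\norm{f}_{L^\infty([j,j+1])} \leq \sum_k \abs{c_k}\,\norm{g(\cdot-k)}_{L^\infty([j,j+1])} = \sum_k \abs{c_k} G_{j-k}$, so with $F_j := \norm{f}_{L^\infty([j,j+1])}$ we have $F_j \leq (\abs{c} * G)_j$; Young with $\abs{c} \in \ell^p$ and $G \in \ell^1$ then produces the stated bound on $\norm{f}_{W(L^\infty,\ell^p)}$. For (c), one has the pointwise estimate $\abs{f(x_k)} \leq \norm{f}_{L^\infty([\floor{x_k},\floor{x_k}+1])}$; grouping the indices $k$ by the value of $\floor{x_k}$ and using that each integer is hit at most $N(X)$ times yields $\sum_k \abs{f(x_k)}^p \leq N(X) \sum_j \norm{f}_{L^\infty([j,j+1])}^p$, and extracting the $p$-th root gives the claim.

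There is no real obstacle: all three items are standard amalgam-space manipulations. The only point that merits attention is the use of Hölder on a unit interval in part (a) to trade the $L^1$ norm of $f$ for its $L^p$ norm with no numerical cost; this is precisely the compatibility that makes the unit-cube decomposition built into $W(L^p,\ell^q)$ interact cleanly with Young-type convolution bounds.
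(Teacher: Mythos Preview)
Your argument is correct. All three parts reduce cleanly to Young's inequality on the sequence of local norms, and the details you give are accurate: the use of H\"older on a unit interval in (a) to pass from $L^1$ to $L^p$ at no cost, the convolution bound $F \leq |c|*G$ in (b), and the counting argument in (c) all go through as written. One minor remark: the hypothesis in the paper's statement reads $g \in W(L^\infty,\ell^p)$, but the bounds in (a) and (b) involve $\norm{g}_{W(L^\infty,\ell^1)}$, so implicitly $g \in W(L^\infty,\ell^1)$ is assumed there; your proof makes this dependence explicit.

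As for comparison with the paper: there is no proof in the paper to compare against. The fact is stated as a well-known special case of the multiplier theory for Wiener amalgam spaces and is simply attributed to \cite{fe83}. Your approach is therefore more elementary and self-contained than what the paper offers, at the price of being tailored to the specific spaces $W(L^\infty,\ell^q)$ rather than invoking the general machinery. For the purposes of this article that is a net gain.
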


\subsection{Localization in spline-type spaces}
We now show that for a well-localized generator function $\varphi$,
if the set of integer translates $\set{\varphi(\cdot-k):k \in \Z}$ forms
an $L^2$ Riesz sequence with lower bound $A$, then they form
an $L^p$ Riesz sequence for all $1\leq p \leq \infty$,
with an uniform lower bound $\approx \frac{A^2}{A+1}$.
Moreover, all the constants involved are explicit. In order to do this
it suffices to give an estimate on the decay of the dual window $\psi$ (see \cite{algr01}.)
\begin{theo}
\label{app_loc}
Let $\varphi: \R \to \C$ be a measurable function such that
\[
\abs{\varphi(x)} \leq C(1+\abs{x})^{-\alpha}
\mbox{, for $x \in \R$,}
\]
holds for some constants $C>0$ and $\alpha>3/2$.
Suppose that,
\[
0 < A \leq \sum_k \abs{\hat{\varphi}(\cdot-k)}^2 \leq B < \infty,
\]
holds for some constants $A,B$ and let $\psi$ be the window dual to $\varphi$.
Then, both $\varphi$ and $\psi$ belong to $W(L^\infty,\ell^1)$ and the following
norm estimates hold:
\begin{align*}
&\norm{\varphi}_{W(L^\infty,\ell^1)} \leq C W_\alpha,
\\
&\norm{\psi}_{W(L^\infty,\ell^1)} \leq
\left(\frac{1}{A} + \frac{\pi}{A^2\sqrt{3}} C^2 K_\alpha S_\alpha \right) C W_\alpha.
\end{align*}
\end{theo}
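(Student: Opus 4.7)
The first bound is a direct computation. Since $|\varphi(x)|\le C(1+|x|)^{-\alpha}$, the supremum of $(1+|x|)^{-\alpha}$ over $[j,j+1]$ equals $(1+j)^{-\alpha}$ for $j\ge 0$, equals $1$ for $j=-1$, and equals $(-j)^{-\alpha}$ for $j\le -2$. Summing these three contributions and reindexing yields exactly $2\sum_{k\ge 1}k^{-\alpha}=W_\alpha$, so $\|\varphi\|_{W(L^\infty,\ell^1)}\le C W_\alpha$.

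For the dual window the strategy is to realise $\psi$ as $\sum_k b_k \varphi(\cdot-k)$ for a sequence $b\in\ell^1$, so that Fact~\ref{amalgam_facts}(b) can be invoked to finish the argument. The gramian $G:=\sum_k |\hat\varphi(\cdot-k)|^2$ is $1$-periodic and, by Plancherel applied to a folded integral over $I$, its $k$-th Fourier coefficient agrees up to sign with $a_k=\int_\R \varphi(x)\overline{\varphi(x-k)}\,dx$, so that $G=\hat a$. The standard characterisation of the dual window reads $\hat\psi=\hat\varphi/G$. Setting $\hat b:=1/G$, the lower gramian bound $A\le G\le B$ together with $a\in\momspace{1}_2(\Z)$ (established in the next paragraph) puts us in the setting of Theorem~\ref{one_theo}, and the translation rule $\widehat{\varphi(\cdot-k)}=e^{-2\pi i k\cdot}\hat\varphi$ converts $\hat\psi=\hat\varphi\cdot\hat b$ into $\psi=\sum_k b_{-k}\varphi(\cdot-k)$.

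The remaining ingredient is an estimate of $\mom{1}{2}(a)$. From the pointwise decay of $\varphi$ and the definition of $K_\alpha$,
\[
|a_k|\le \int_\R C^2 (1+|x|)^{-\alpha}(1+|x-k|)^{-\alpha}\,dx \le C^2 K_\alpha (1+|k|)^{-\alpha},
\]
so $\mom{1}{2}(a)^2 \le C^4 K_\alpha^2 \sum_{k\in\Z} k^2(1+|k|)^{-2\alpha}=C^4 K_\alpha^2 S_\alpha^2$ (the finiteness of this series is guaranteed by $\alpha>3/2$). Theorem~\ref{one_theo} then gives $\|b\|_{\ell^1}\le 1/A+\pi/(A^2\sqrt{3})\cdot C^2 K_\alpha S_\alpha$, and Fact~\ref{amalgam_facts}(b) combined with the first bound on $\varphi$ produces the stated estimate on $\|\psi\|_{W(L^\infty,\ell^1)}$.

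The main obstacle, conceptually small but requiring care, is to convert the spectral identity $\hat\psi=\hat\varphi/G$ into the concrete pointwise representation $\psi=\sum_k b_{-k}\varphi(\cdot-k)$ with unconditional convergence in $W(L^\infty,\ell^1)$; once this is set up, the chain $a\in\momspace{1}_2 \Rightarrow b\in\ell^1 \Rightarrow \psi\in W(L^\infty,\ell^1)$ is entirely routine. Note that working in dimension one with the first-order momentum lets us use Theorem~\ref{one_theo} directly, bypassing the recursive bookkeeping of Theorem~\ref{multi_theo}.
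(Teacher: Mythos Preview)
Your proposal is correct and follows essentially the same route as the paper: define the autocorrelation sequence $a_k=\ip{\varphi}{\varphi(\cdot\pm k)}$, identify $\hat a$ with the gramian via Poisson summation, apply Theorem~\ref{one_theo} to bound $\norm{b}_1$, and then use the amalgam estimate (Fact~\ref{amalgam_facts}(b)) together with $\psi=\sum_k b_{\mp k}\varphi(\cdot-k)$ to finish. The only cosmetic differences are your explicit unpacking of the $W_\alpha$ computation and your framing of the dual-window identity via $\hat\psi=\hat\varphi/G$ rather than the paper's direct definition of $\psi$ and verification of biorthogonality.
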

%to do: Check the case $p=\infty$.
\begin{coro}
\label{app_loc_coro}
Let $\varphi: \R \to \C$ be a measurable function such that
\[
\abs{\varphi(x)} \leq C(1+\abs{x})^{-\alpha}
\mbox{, for $x \in \R$,}
\]
holds for some constants $C>0$ and $\alpha>3/2$ and that
\[
0 < A \leq \sum_k \abs{\hat{\varphi}(\cdot-k)}^2 \leq B < \infty,
\]
holds for some constants $A,B$.

Then, for every $1 \leq p \leq \infty$ the following relation holds,
\begin{align*}
r \norm{c}_{\ell^p} &\leq \norm{\sum_k c_k \varphi(\cdot-k)}_{L^p} \leq R \norm{c}_{\ell^p}
\mbox{, for $c \in \ell^p$,}
\\
\mbox{where, }
r &= C^{-1} W_\alpha^{-1}
\frac{A^2}{A+\frac{\pi}{\sqrt{3}} C^2 K_\alpha S_\alpha},
%r &= \left(
%\left(\frac{1}{A} + \frac{\pi}{A^2\sqrt{3}} C^2 K_\alpha S_\alpha \right) C W_\alpha
%\right)^{-1},
\\
R &= C W_\alpha.
\end{align*}
\end{coro}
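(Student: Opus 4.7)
The plan is to derive both Riesz bounds from the amalgam estimates in Fact \ref{amalgam_facts}, combined with the norm bounds on $\varphi$ and on its dual window $\psi$ provided by Theorem \ref{app_loc}. All the analytic work (inverting the gramian and transferring decay to $\psi$ via Theorem \ref{one_theo}) has been absorbed into that theorem, so the remaining argument is essentially bookkeeping.

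For the upper bound I would fix a finitely supported sequence $c$, set $f := \sum_k c_k \varphi(\cdot-k)$, and apply Fact \ref{amalgam_facts}(b) with $g=\varphi$ to obtain $\norm{f}_{W(L^\infty,\ell^p)} \leq \norm{c}_{\ell^p}\, \norm{\varphi}_{W(L^\infty,\ell^1)}$. Since each unit interval has length one, $\norm{f}_{L^p([j,j+1])} \leq \norm{f}_{L^\infty([j,j+1])}$, so $\norm{f}_{L^p} \leq \norm{f}_{W(L^\infty,\ell^p)}$. Combining this with the estimate $\norm{\varphi}_{W(L^\infty,\ell^1)} \leq CW_\alpha$ from Theorem \ref{app_loc} yields exactly $R = CW_\alpha$.

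For the lower bound I would use the biorthogonality $\ip{\varphi(\cdot-j)}{\psi(\cdot-k)} = \delta_{j,k}$ to recover the coefficients: for finitely supported $c$, $c_k = \ip{f}{\psi(\cdot-k)}$. Applying Fact \ref{amalgam_facts}(a) with $g=\psi$ then gives $\norm{c}_{\ell^p} \leq \norm{f}_{L^p}\, \norm{\psi}_{W(L^\infty,\ell^1)}$; substituting the bound $\norm{\psi}_{W(L^\infty,\ell^1)} \leq \bigl(\tfrac{1}{A} + \tfrac{\pi}{A^2\sqrt{3}} C^2 K_\alpha S_\alpha\bigr) C W_\alpha$ from Theorem \ref{app_loc} and clearing the fraction over the common denominator $A^2$ produces precisely the claimed $r$. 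The norm equivalence then extends from finitely supported $c$ to arbitrary $c \in \ell^p$ by the standard density and unconditional-convergence argument recorded below the definition of $p$-Riesz sequence.

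The only point worth flagging, rather than a genuine obstacle, is that the pairing $\ip{f}{\psi(\cdot-k)}$ must make sense for every $1 \leq p \leq \infty$, not merely for $p=2$. This is automatic once one observes that $W(L^\infty,\ell^1)$ embeds continuously into $L^q$ for every $q \in [1,\infty]$, so $\psi$ belongs to every dual space $L^{p'}$; the biorthogonality relations, originally verified in $L^2$, then transfer without change to this pairing on finitely supported coefficient sequences, which is all the Riesz bound needs.
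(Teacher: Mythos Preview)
Your proposal is correct and follows essentially the same route as the paper's proof: both derive the upper bound from Fact~\ref{amalgam_facts}(b) together with the estimate $\norm{\varphi}_{W(L^\infty,\ell^1)}\leq CW_\alpha$, recover $c_k=\ip{f}{\psi(\cdot-k)}$ via biorthogonality, apply Fact~\ref{amalgam_facts}(a) and the bound on $\norm{\psi}_{W(L^\infty,\ell^1)}$ from Theorem~\ref{app_loc} for the lower bound, and then extend by density. Your added remarks on why $\norm{f}_{L^p}\leq\norm{f}_{W(L^\infty,\ell^p)}$ and on the well-definedness of the pairing for general $p$ make explicit points the paper leaves implicit, but the strategy is identical.
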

\begin{rem}
\label{qrem}
Observe that for $r \approx \frac{A^2}{A+1}$. Moreover, the implicit constants are continuously
determined by $C$ and $\alpha$, the parameters measuring the decay of $\varphi$.
\end{rem}
\begin{proof}[Proof of Theorem \ref{app_loc}]
A direct computation shows that
$\norm{\varphi}_{W(L^\infty, \ell^1)} \leq C W_\alpha$.

For $k \in \Z$, let us set $a_k:=\ip{\varphi}{\varphi(\cdot+k)}$.
Then $\abs{a_k} \leq C^2 K_\alpha (1+\abs{k})^{-\alpha}$
and $\mom{1}{2}(a) \leq C^2 K_\alpha S_\alpha$.

Poisson's summation formula implies that $\hat{a}=\sum_k \abs{\hat{\varphi}(\cdot-k)}^2$. Therefore,
Theorem \ref{one_theo} implies that there exists $b \in \momspace{1}_2(\ell^2)$ such that
$a*b=\delta$. Moreover $\norm{b}_1 \leq \frac{1}{A} + \frac{\pi}{A^2\sqrt{3}} \mom{1}{2}(a)$.

If we let $\psi := \sum_k b_k \varphi(\cdot+k)$, it follows that $\psi \in S^2$ is
the window dual to $\varphi$ and we simply estimate,
\begin{align*}
\norm{\psi}_{W(L^\infty, \ell^1)} &\leq  \norm{b}_1 \norm{\varphi}_{W(L^\infty, \ell^1)}
\\
&\leq \left(\frac{1}{A} + \frac{\pi}{A^2\sqrt{3}} C^2 K_\alpha S_\alpha \right)
     \norm{\varphi}_{W(L^\infty, \ell^1)}.
\end{align*}
The conclusion follows by
combining this estimate with the bound given for $\norm{\varphi}_{W(L^\infty, \ell^1)}$.
\end{proof}

\begin{proof}[Proof of Corollary \ref{app_loc_coro}]
Let $1 \leq p \leq \infty$ and $c$ be a finitely supported sequence of complex numbers.
Set $f := \sum_k c_k \varphi(\cdot-k)$.

According to Theorem \ref{app_loc},
\begin{align*}
\norm{f}_{L^p} &\leq \norm{f}_{W(L^\infty,\ell^p)} \leq \norm{c}_{\ell^p} \norm{\varphi}_{W(L^\infty,\ell^1)}
\\
&\leq C W_\alpha \norm{c}_{\ell^p}.
\end{align*}
Let us show the lower bound holds. With the notation of Theorem \ref{app_loc},
observe that $c_k = \ip{f}{\psi(\cdot-k)}$, for all $k$. Hence,
\[
\norm{c}_{\ell^p} \leq \norm{f}_{L^p} \norm{\psi}_{W(L^\infty,\ell^1)}.
\]
Now the estimate in Theorem \ref{app_loc} yields the lower bound. An approximation
argument proves the norm equivalence for arbitrary $c \in \ell^p$.
\end{proof}
\subsection{Oversampling from oscillation estimates}
Now we illustrate how oversampling bounds can be obtained from the estimates of the
previous sections.
\begin{theo}
\label{over}
Let $\varphi: \R \to \C$ be a function such that
\[
\abs{\varphi(x)} \leq C(1+\abs{x})^{-\alpha}
\mbox{, for $x \in \R$,}
\]
holds for some constants $C>0$ and $\alpha>3/2$.
Suppose that,
\[
0 < A \leq \sum_k \abs{\hat{\varphi}(\cdot-k)}^2 \leq B < \infty,
\]
holds for some constants $A,B$, and that $\varphi$ has a weak
derivative $\varphi' \in W(L^q, \ell^1)$ for some $1<q\leq\infty$.

Let $\delta>0$ be such that
\[
\rho := C^3 W_\alpha^3
\left(\frac{1}{A} + \frac{\pi}{A^2\sqrt{3}} C^2 K_\alpha S_\alpha \right)^2
(2 \ceil{\delta}+1) \norm{\varphi'}_{W(L^q,\ell^1)} \delta^{1-1/q} < 1.
\]
Then, for every relatively separated set $X \equiv \set{x_k}_{k\in\Z}$
that is $\delta$-dense and any $1 \leq p \leq \infty$, the following sampling
estimate holds,
\[
c_p \norm{f}_{L^p} \leq \norm{(f(x_k))_{k\in \Z}}_{\ell^p} \leq C_p \norm{f}_{L^p},
\qquad \mbox{for all $f \in S^p$},
\]
\begin{align*}
\mbox{where }
C_p &= N(X)^{1/p} C^2 W_\alpha^2 \left(\frac{1}{A} + \frac{\pi}{A^2\sqrt{3}} C^2 K_\alpha S_\alpha \right),
\\
c_p &= (1-\rho)(2\delta)^{-1/p} C^{-2} W_\alpha^{-2}
\frac{A^2}{A+\frac{\pi}{\sqrt{3}} C^2 K_\alpha S_\alpha},
%c_p &= (1-\rho)(2\delta)^{-1/p} C^{-2} W_\alpha^{-2}
%\left(\frac{1}{A} + \frac{\pi}{A^2\sqrt{3}} C^2 K_\alpha S_\alpha \right)^{-1},
\end{align*}
and $S^p$ stands for the $L^p$-closed linear span of $\set{\varphi(\cdot-k):k\in\Z}$.
\end{theo}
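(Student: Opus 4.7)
The strategy is the oscillation reconstruction scheme of Aldroubi and Feichtinger: compare $f\in S^p$ to a piecewise-constant approximation built from the samples $(f(x_k))_k$, control the error by the $\delta$-oscillation of $f$, and close the estimates using the explicit Riesz and dual-window bounds provided by Corollary~\ref{app_loc_coro} and Theorem~\ref{app_loc}.

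For the upper bound I expand $f=\sum_k c_k\varphi(\cdot-k)$ with $c_k=\ip{f}{\psi(\cdot-k)}$, where $\psi$ is the dual window supplied by Theorem~\ref{app_loc}. Fact~\ref{amalgam_facts}(a) gives $\norm{c}_{\ell^p}\leq\norm{f}_{L^p}\norm{\psi}_{W(L^\infty,\ell^1)}$; Fact~\ref{amalgam_facts}(b) lifts $f$ to $W(L^\infty,\ell^p)$ with $\norm{f}_{W(L^\infty,\ell^p)}\leq\norm{c}_{\ell^p}\norm{\varphi}_{W(L^\infty,\ell^1)}$; and Fact~\ref{amalgam_facts}(c) samples on $X$ to yield $\norm{(f(x_k))_k}_{\ell^p}\leq N(X)^{1/p}\norm{f}_{W(L^\infty,\ell^p)}$. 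Substituting the explicit amalgam norms of $\varphi$ and $\psi$ from Theorem~\ref{app_loc} produces the constant $C_p$.

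For the lower bound, using that $X$ is relatively separated and $\delta$-dense I first choose a measurable partition $\set{A_k}_k$ of $\R$ with $x_k\in A_k\subseteq(x_k-\delta,x_k+\delta)$ and $\abs{A_k}\leq 2\delta$ (e.g.\ a Voronoi partition). Setting $g:=\sum_k f(x_k)\chi_{A_k}$, one has $\norm{g}_{L^p}\leq(2\delta)^{1/p}\norm{(f(x_k))_k}_{\ell^p}$ and $\abs{f-g}\leq\osc_\delta(f)$ pointwise on each $A_k$, hence
\[
\norm{f}_{L^p}\leq(2\delta)^{1/p}\norm{(f(x_k))_k}_{\ell^p}+\norm{\osc_\delta(f)}_{L^p}.
\]
The heart of the argument is to bound the oscillation term by a small multiple of $\norm{f}_{L^p}$. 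From $f=\sum c_k\varphi(\cdot-k)$ one gets $\osc_\delta(f)\leq\sum_k\abs{c_k}\osc_\delta(\varphi)(\cdot-k)$, so Fact~\ref{amalgam_facts}(b) followed by the embedding $W(L^\infty,\ell^p)\hookrightarrow L^p$ yields $\norm{\osc_\delta(f)}_{L^p}\leq\norm{c}_{\ell^p}\norm{\osc_\delta(\varphi)}_{W(L^\infty,\ell^1)}$. The weak-derivative hypothesis enters via H\"older: $\osc_\delta(\varphi)(x)\leq(2\delta)^{1-1/q}\norm{\varphi'}_{L^q[x-\delta,x+\delta]}$, and counting how many unit intervals meet $[x-\delta,x+\delta]$ (at most $2\ceil{\delta}+1$) controls $\norm{\osc_\delta(\varphi)}_{W(L^\infty,\ell^1)}$ by a multiple of $(2\ceil{\delta}+1)\,\delta^{1-1/q}\norm{\varphi'}_{W(L^q,\ell^1)}$. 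Combining this with the Riesz estimate $\norm{c}_{\ell^p}\leq\norm{\psi}_{W(L^\infty,\ell^1)}\norm{f}_{L^p}$ and the explicit bounds of Theorem~\ref{app_loc} reproduces exactly the factor $\rho$; since $\rho<1$ the oscillation term absorbs into the left side and a rearrangement produces $c_p\norm{f}_{L^p}\leq\norm{(f(x_k))_k}_{\ell^p}$.

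The main obstacle is the numerical bookkeeping: every appearance of a norm of $\varphi$ or $\psi$ has to be replaced by the explicit bound from Theorem~\ref{app_loc}, and the H\"older exponent $1-1/q$, the multiplicity factor $2\ceil{\delta}+1$, and the volume factor $(2\delta)^{1/p}$ have to be chained carefully to match the stated $\rho$ and $c_p$. The $p=\infty$ endpoint is handled by the convention $(2\delta)^{0}=1$, and a standard density argument extends the estimates from finitely supported sequences to arbitrary $c\in\ell^p$.
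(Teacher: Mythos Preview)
Your argument is sound and proves the theorem; the oscillation bound on $\varphi$ and the upper sampling bound are handled exactly as in the paper. For the lower bound, however, your route differs from the paper's, and your claim that the bookkeeping ``reproduces exactly the factor $\rho$'' is not correct. The paper does not compare $f$ directly to a step-function interpolant; it invokes Proposition~\ref{alfe_fork}, which projects back onto $S^p$ via the operator $P$ and bounds $\norm{PIZ-\mathrm{Id}}_{B(S^p)}$. That projection contributes an extra factor $\norm{P}\leq\norm{\varphi}_{W(L^\infty,\ell^1)}\norm{\psi}_{W(L^\infty,\ell^1)}$, which is precisely why $\rho$ has the shape $C^3W_\alpha^3(\,\cdot\,)^2\cdots$ and why $c_p$ carries the factor $C^{-2}W_\alpha^{-2}\,A^2/(A+\frac{\pi}{\sqrt{3}}C^2K_\alpha S_\alpha)$. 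Your direct comparison produces only $\norm{\psi}_{W(L^\infty,\ell^1)}\norm{\osc_\delta(\varphi)}_{W(L^\infty,\ell^1)}$ as the contraction constant and $(1-\text{that})(2\delta)^{-1/p}$ as the lower bound --- strictly sharper on both counts, since $\norm{\varphi}_{W(L^\infty,\ell^1)}\norm{\psi}_{W(L^\infty,\ell^1)}\geq\norm{P}\geq 1$. So the stated $\rho$ and $c_p$ are still valid (weaker) bounds and the theorem follows, but your constants are not the ones announced. One minor slip: the H\"older step gives $\delta^{1-1/q}$, not $(2\delta)^{1-1/q}$, because the interval joining $x$ and $y$ has length at most $\delta$; this is the exponent that appears in $\rho$.
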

\begin{rem}
Since the above result is just an application of the general sampling scheme
developed in \cite{alfe98}, it follows that the effective reconstruction
procedure developed there also applies here.
\end{rem}

In order to prove the theorem, we will need this slight adaptation of one of the
results in \cite{alfe98} (see also \cite{algr01}).
\begin{prop}
\label{alfe_fork}
Let $\varphi \in W(L^\infty,\ell^1)$ be such that the set of integer translates
$\set{\varphi(\cdot-k):k\in\Z}$ is an $p$-Riesz basis of its $L^p$-closed
linear span $S^p$.
Suppose that there exists $\psi \in W(L^\infty,\ell^1)$ such that the sets
$\set{\varphi(\cdot-k):k\in\Z}$ and $\set{\psi(\cdot-k):k\in\Z}$
are bi-orthogonal.

Let $\delta>0$ be such that
\[
\gamma:=\norm{\varphi}_{W(L^\infty.\ell^1)} \norm{\psi}_{W(L^\infty.\ell^1)}^2
\norm{\osc_\delta(\varphi)}_{W(L^\infty.\ell^1)}<1.
\]
Then, for every relatively separated set $X \equiv \set{x_k}_{k\in\Z}$
that is $\delta$-dense the following sampling estimate holds,
\[
c_p \norm{f}_{L^p} \leq \norm{(f(x_k))_{k\in \Z}}_{\ell^p} \leq C_p \norm{f}_{L^p},
\qquad \mbox{for all $f \in S^p(\varphi)$},
\]
\begin{align*}
\mbox{where } C_p &= N(X)^{1/p} \norm{\varphi}_{W(L^\infty,\ell^1)} \norm{\psi}_{W(L^\infty,\ell^1)},
\\
c_p &= (1-\gamma)(2\delta)^{-1/p}
\norm{\varphi}_{W(L^\infty,\ell^1)}^{-1} \norm{\psi}_{W(L^\infty,\ell^1)}^{-1},
\end{align*}
where $1/p=0$, if $p=\infty$.
\end{prop}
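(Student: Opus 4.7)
The plan is to combine the amalgam-space machinery of Fact~\ref{amalgam_facts} with an Aldroubi--Feichtinger-style reconstruction operator built from the samples, controlling its defect on $S^p$ via the oscillation hypothesis and inverting by a Neumann series. For the upper bound, I would chain items (a)--(c) of Fact~\ref{amalgam_facts}: given $f \in S^p$, bi-orthogonality produces $f = \sum_k c_k \varphi(\cdot - k)$ with $c_k = \ip{f}{\psi(\cdot - k)}$, so (a) yields $\norm{c}_{\ell^p} \leq \norm{f}_{L^p}\norm{\psi}_{W(L^\infty,\ell^1)}$, (b) yields $\norm{f}_{W(L^\infty,\ell^p)} \leq \norm{c}_{\ell^p}\norm{\varphi}_{W(L^\infty,\ell^1)}$, and (c) yields $\norm{(f(x_k))_k}_{\ell^p} \leq N(X)^{1/p}\norm{f}_{W(L^\infty,\ell^p)}$. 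Composing these gives the claimed $C_p$ directly.

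For the lower bound, I would use the $\delta$-density of $X$ to pick a measurable partition $\set{Y_k}_{k\in\Z}$ of $\R$ with $x_k \in Y_k \subset [x_k-\delta, x_k+\delta]$, so $\abs{Y_k} \leq 2\delta$. Setting $\tilde f := \sum_k f(x_k)\chi_{Y_k}$, the central object is the reconstruction operator
\[
T f := \sum_j \ip{\tilde f}{\psi(\cdot - j)}\, \varphi(\cdot - j),
\]
which takes samples of $f$ and reassembles an element of $S^p$ via the dual system $\set{\psi(\cdot-j)}$ and the generator $\varphi$. The aim is to show $\norm{Tf - f}_{L^p} \leq \gamma \norm{f}_{L^p}$ and then invert.

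The main step is the oscillation bound. For $x \in Y_k$ the condition $\abs{x - x_k} \leq \delta$ gives the pointwise estimate
\[
\abs{f(x) - f(x_k)} \leq \sum_j \abs{c_j}\, \osc_\delta(\varphi)(x - j),
\]
so Fact~\ref{amalgam_facts}(b) combined with the trivial embedding $W(L^\infty,\ell^p) \hookrightarrow L^p$ yields $\norm{\tilde f - f}_{L^p} \leq \norm{c}_{\ell^p}\norm{\osc_\delta(\varphi)}_{W(L^\infty,\ell^1)}$; then (a) controls $\norm{c}_{\ell^p}$ by $\norm{\psi}_{W(L^\infty,\ell^1)}\norm{f}_{L^p}$. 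Passing $\tilde f - f$ through the operator $h \mapsto \sum_j \ip{h}{\psi(\cdot - j)}\varphi(\cdot - j)$, which is bounded on $L^p$ with norm $\leq \norm{\varphi}_{W(L^\infty,\ell^1)}\norm{\psi}_{W(L^\infty,\ell^1)}$ by composing (a) and (b), produces exactly $\norm{Tf - f}_{L^p} \leq \gamma \norm{f}_{L^p}$. I expect this to be the main technical step, the subtle point being to correctly promote the pointwise oscillation estimate to an $L^p$ estimate through the amalgam hierarchy without losing the partition structure.

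The rest is routine. Since $\gamma < 1$, $T$ is invertible on $S^p$ with $\norm{T^{-1}} \leq (1-\gamma)^{-1}$, giving $\norm{f}_{L^p} \leq (1-\gamma)^{-1}\norm{Tf}_{L^p}$. A second application of (a) and (b) bounds $\norm{Tf}_{L^p} \leq \norm{\varphi}_{W(L^\infty,\ell^1)}\norm{\psi}_{W(L^\infty,\ell^1)}\norm{\tilde f}_{L^p}$, and $\abs{Y_k} \leq 2\delta$ yields $\norm{\tilde f}_{L^p} \leq (2\delta)^{1/p}\norm{(f(x_k))_k}_{\ell^p}$. Combining these two estimates produces the stated $c_p$, with the convention $(2\delta)^{1/p} = 1$ when $p = \infty$ handling the endpoint case unchanged.
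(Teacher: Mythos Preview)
Your proposal is correct and follows essentially the same Aldroubi--Feichtinger scheme as the paper: form the composition sampling $\to$ quasi-interpolation $\to$ projection onto $S^p$, bound its defect from the identity by $\gamma$ via the oscillation estimate, and invert. The only cosmetic differences are that the paper uses a general nonnegative partition of unity $\{g_k\}$ subordinate to $\{(x_k-\delta,x_k+\delta)\}$ where you use the characteristic functions $\chi_{Y_k}$, and that the paper delegates the bound $\norm{PIZ-\mathrm{Id}}\leq\gamma$ to \cite{alfe98} whereas you spell it out explicitly via the pointwise inequality $\abs{f-\tilde f}\leq\sum_j\abs{c_j}\,\osc_\delta(\varphi)(\cdot-j)$.
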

\begin{proof}
Consider the operator $P:L^p(\R) \to L^p(\R)$ given by
\[P(f):=\sum_{k\in\Z^d} \ip{f}{\psi(\cdot-k)} \varphi(\cdot-k).\]
Since $\psi \in W(C_0,\ell^1)$ it follows that $P$ is well defined
and that
\[
\norm{P} \leq \norm{\varphi}_{W(L^\infty,\ell^1)} \norm{\psi}_{W(L^\infty,\ell^1)}.
\]
Moreover, since the sets $\set{\varphi(\cdot-k):k\in\Z}$ and
$\set{\psi(\cdot-k):k\in\Z}$ are bi-orthogonal it follows
that $P$ is a projector on $S^p$, the $L^p$-closed linear span of $\set{\varphi(\cdot-k):k\in\Z}$.

Since $X$ is $\delta$-dense, the family $\set{(x_k-\delta,x_k+\delta): k\in\Z}$
covers $\R^d$. Let $\set{g_k: k \in \Z}$ be a nonnegative partition
of the unity subordinated to it.

Consider the operators,
\begin{align*}
&Z: S^p \to \ell^p,
\qquad
Z(f):=(f(x_k))_{k\in\Z},
\\
&I: \ell^p \to L^p,
\qquad
I(c) := \sum_{k\in\Z} c_k g_k,
\qquad
\\
&PIZ: S^p(\varphi) \to S^p(\varphi).
\end{align*}

The argument in \cite{alfe98} shows that $\norm{PIZ-Id}_{B(S^p)} \leq \gamma < 1$.

Hence, the operator $Z$ is left-invertible and $X$ is a $p$-sampling set
for $S^p$. Let us establish the announced bounds. For $f\in S^p(\varphi)$,
\[
\norm{Z(f)}_{\ell^p} \leq N(X)^{1/p} \norm{f}_{W(L^\infty,\ell^p)}
\leq N(X)^{1/p} \norm{\varphi}_{W(L^\infty,\ell^1)} \norm{\psi}_{W(L^\infty,\ell^1)}
\norm{f}_{L^p}.
\]
This establishes the upper bounds, for the lower one, let us first estimate
the operator norm of $I$.
For $c \in \ell^p$ and $p<\infty$, using H\"older's inequality,
\begin{align*}
\norm{I(c)}_{\ell^p}^p &\leq \int_\R \left( \sum_k \abs{c_k} g_k(x) \right)^p dx
\\
&\leq \int_\R \sum_k \abs{c_k}^p g_k(x) \left( \sum_k g_k(x) \right)^{p/p'} dx
\\
&= \sum_k \abs{c_k}^p \int_\R g_k(x) dx \leq 2\delta \sum_k \abs{c_k}^p 
\end{align*}
So $\norm{I} \leq (2\delta)^{1/p}$. The same estimate is easily established
when $p=\infty$ if we interpret $1/p$ as $0$.
Now for $f \in S^p$,
\[
\norm{f}_{L^p} \leq \norm{f-PIZ(f)}_{L^p} + \norm{PIZ(f)}_{L^p}
\leq \gamma \norm{f}_{L^p} + \norm{P} \norm{I} \norm{Z(f)}_{\ell^p}.
\]
So,
\begin{align*}
\norm{f}_{L^p}
&\leq \frac{1}{1-\gamma} \norm{P} \norm{I} \norm{Z(f)}_{\ell^p}
\\
&\leq \frac{1}{1-\gamma}
\norm{\varphi}_{W(L^\infty,\ell^1)} \norm{\psi}_{W(L^\infty,\ell^1)}
(2\delta)^{1/p} \norm{Z(f)}_{\ell^p},
\end{align*}
and the estimate given follows.
\end{proof}
Now we can combine the last proposition with the estimates of the previous sections
to prove Theorem \ref{over}
\begin{proof}[Proof of Theorem \ref{over}]
Let $\gamma$ be defined as in the statement of Proposition \ref{alfe_fork}.

For any $x,y \in \R$ such that $\abs{x-y}\leq\delta$,
if we call $J$ the interval determined by $x$ and $y$
we can estimate,
\[
\abs{\varphi(x)-\varphi(y)} \leq \int_J \abs{\varphi'}
\leq \norm{\varphi'}_{L^q(J)} \delta^{1-1/q}.
\]
Therefore, for every $x\in\R$, we have that
$\osc_\delta{\varphi}(x) \leq \norm{\varphi'}_{L^q([x-\delta,x+\delta])} \delta^{1-1/q}$
and for $j \in \Z$,
$\sup_{[0,1]+j} \osc_\delta(\varphi) \leq \norm{\varphi'}_{L^q([j-\delta,j+1+\delta])} \delta^{1-1/q}$.

For any integer $j$ the inclusion
\[
[j-\delta,j+1+\delta] \subseteq \bigcup_{h=-\ceil{\delta}}^{\ceil{\delta}} [j+h,j+h+1],
\]
and Minkowski's inequality implies that
\[
\norm{\varphi'}_{L^q([j-\delta,j+1+\delta])} \leq \sum_{h=-\ceil{\delta}}^{\ceil{\delta}}
\norm{\varphi'}_{L^q([j+h,j+h+1])}.
\]
Consequently, 
\begin{align*}
\norm{\osc_\delta(\varphi)}_{W(L^\infty,\ell^1)} &\leq
\sum_{j\in\Z} \sum_{h=-\ceil{\delta}}^{\ceil{\delta}} \norm{\varphi'}_{L^q([j+h,j+h+1])}
\delta^{1-1/q}
\\
&= \sum_{h=-\ceil{\delta}}^{\ceil{\delta}} \sum_{j\in\Z} \norm{\varphi'}_{L^q([j+h,j+h+1])}
\\
&= (2 \ceil{\delta}+1) \norm{\varphi'}_{W(L^q,\ell^1)} \delta^{1-1/q}.
\end{align*}
We combine this with the estimates of Theorem \ref{app_loc} 
to conclude that $\gamma \leq \rho <1$.

It follows now from Proposition \ref{alfe_fork}
that every relatively separated, $\delta$-dense set is a $p$-sampling set for $S^p$
for all $1 \leq p \leq \infty$, with the announced bounds.
\end{proof}
\section{Acknowledgements}
The author holds a fellowship from the CONICET and thanks this institution for its support.
His research is also partially supported by Grants: PICT 15033, CONICET, PIP 5650, UBACyT  X108.
%\nocite{*}

%\bibliographystyle{plain}
%\bibliography{art}
\def\cprime{$'$}

\end{document}